\theoremstyle{plain}
\newtheorem{theorem}{Theorem}[section]
\newtheorem{conjecture}[theorem]{Conjecture}
\newtheorem{lemma}[theorem]{Lemma}
\theoremstyle{definition}
\newtheorem{definition}[theorem]{Definition}
\newtheorem{remark}[theorem]{Remark}
\newcommand\pp{{\mathbb P}}
\renewcommand\P{{\mathbb P}}
\newcommand\cN{{\mathcal N}}
\newcommand\N{{\mathbb N}}
\newcommand\R{{\mathbb R}}
\newcommand\E{{\mathbb E}}
\newcommand\I{{\textbf{1}}}
\newcommand{\cov}{\textrm{Cov}}
\newcommand{\sech}{\textrm{sech}}
\newcommand{\eps}{\varepsilon}
\newcommand\sign{{\text{sign}}}
\newcommand\norm[1]{\|#1\|}
\newcommand{\ep}{\varepsilon}
\def\be#1{\begin{equation*}#1\end{equation*}}
\def\ben#1{\begin{equation}#1\end{equation}}
\def\bea#1{\begin{eqnarray*}#1\end{eqnarray*}}
\def\bean#1{\begin{eqnarray}#1\end{eqnarray}}
\def\ba#1{\begin{align*}#1\end{align*}}
\DeclareMathOperator{\Var}{Var}
\DeclareMathOperator{\Cov}{Cov}
\def\be#1{\begin{equation*}#1\end{equation*}}
\def\ben#1{\begin{equation}#1\end{equation}}
\def\bea#1{\begin{eqnarray*}#1\end{eqnarray*}}
\def\bean#1{\begin{eqnarray}#1\end{eqnarray}}
\title{Concentration inequalities for the number of real zeros of Kac polynomials}
\author{ Van Hao Can \and Oanh Nguyen}
\address{Institute of Mathematics\\ Vietnam Academy of Science and Technology\\ 18 Hoang Quoc Viet, Cau Giay, Hanoi, Vietnam }
\email{cvhao@math.ac.vn}
\thanks{The work of Van Hao Can is supported by the Vietnam Academy of Science and Technology grant number CTTH00.02/22-23. }
\address{Division of Applied Mathematics\\ Brown University\\  Providence, RI 02906, USA}
\email{oanh\_nguyen1@brown.edu}
\thanks{Oanh Nguyen is supported by NSF grants DMS–1954174 and DMS–2246575.}
\begin{document}
	
	\maketitle
	
	  \begin{abstract}  We study concentration inequalities for the number of real roots of the classical Kac polynomials $$f_{n}  (x) = \sum_{i=0}^n  \xi_i x^i$$ where $\xi_i$ are independent random variables with mean 0, variance 1, and uniformly bounded $(2+\ep_0)$-moments. We establish polynomial tail bounds, which are optimal, for the bulk of roots. For the whole real line, we establish sub-optimal tail bounds.
	 \end{abstract}

	 \section{Introduction}
	 The Kac polynomials defined as
	 $$f_{n}  (x) = \sum_{i=0}^n  \xi_i x^i$$ where $\xi_i$ are independent (but not necessarily identically distributed) random variables with mean 0 and variance 1
	 are one of the most studied classes of random polynomials. We assume these conditions on the $\xi_i$ throughout the paper.

	 The investigation of real roots of these random polynomials has a long history dated back to the beautiful work of Bloch and P\'olya \cite{BP} in 1932 where they showed that the number of real roots, denoted by $N_n(\R)$, is bounded by $\sqrt n$ with high probability, in the case that the $\xi_i$ take values in $\{\pm 1, 0\}$ with equal probability.
	 In the early 1940s, Littlewood and Offord \cite{LO1, LO2, LO3} showed that the number of real roots is in fact poly-logarithmic in $n$.
	 The exact asymptotics of the expected number of real roots was discovered by Kac \cite{Kac1943average} using his celebrated formula, nowadays known as the Kac-Rice formula. He showed that when the random variables are Gaussian, the expected number of real roots is 
	 $\left (\frac{2}{\pi} +o(1)\right ) \log n.$

	 The general case when the $\xi_i$ are not necessarily Gaussian was settled by Erd\H{o}s and Offord \cite{EO}  and Ibragimov and Maslova \cite{Ibragimov1971expected1, Ibragimov1971expected2} who showed that the same asymptotics holds when the $\xi_m$ are iid and belong to the domain of attraction of the normal law. They employed a beautiful method that related the number of real roots to the number of sign changes of $f_n$ which effectively reduced the task to evaluating certain integral of the characteristic function of the $\xi_i$. In recent years, using a new universality method, Tao and Vu \cite{TVpoly} and subsequent papers by the second authors and collaborators \cite{DOV}, \cite{nguyenvurandomfunction17} have built up a robust framework that among other things, refines the above estimate to $\frac{2}{\pi}\log n+ O(1)$ when the random variables $\xi_i$ are not necessarily identically distributed but have uniformly bounded $(2+\ep_0)$-moments for some $\ep_0>0$.
	 
	 Besides, the asymptotics distribution of $N_n(\R)$ was established in \cite{Mas2} and \cite{nguyenvuCLT} where it was shown that 
	 $$\frac{N_n(\R) - \E N_n(\R)}{c\sqrt {\log n}}$$
	 converges to the standard Gaussian in distribution as $n\to \infty$ for some constant $c$.

	 The quest for quantitative estimates of the concentration of $N_n(\R)$ has attracted a lot of attention. In the series of seminal papers \cite{LO1, LO2, LO3} back in the 1940s, Littlewood and Offord showed that if the $\xi_i$ are iid uniform in $[-1, 1]$, or standard Gaussian, or Rademacher (uniform in $\{-1, 1\}$), then 
	 $$\P\left (N_n(\R)>25(\log n)^{2}\right )\le \frac{12\log n}{n}\quad \text{and}\quad \P\left (N_n(\R)<\frac{\alpha \log n}{(\log\log n)^{2}}\right )<\frac{A}{\log n}.$$
	 
	  It is natural to conjecture the following.
	 \begin{conjecture}\label{conj} Under mild conditions, for every $\ep\in (0, 2/\pi)$, there exist constants $A, c, C$ such that for all $n$, it holds that 
	 \begin{equation} 
	 \label{eq:conj}
	 \frac1A	n^{-C} \leq \pp \left( |N_{n}(\R) - \E N_n(\R)| \geq  \ep  \log n \right) \leq A n^{-c}.
	 \end{equation}
	 \end{conjecture}
	 
	 Several interesting results (and conjectures) have been made in recent years toward this question. The left-hand side of \eqref{eq:conj} was established in the seminal paper \cite{Dembopersistence} by Dembo, Poonen, Shao and Zeitouni (2002) where, among other things, it was shown that the probability of the polynomial having no real roots is $n^{-\Theta(1)}$. 
	 \begin{theorem}  \label{thm:D} \cite[Theorem 1.2]{Dembopersistence}
	 	Assume that the $\xi_i$ are iid with all moments finite. Then there exists a positive constant  $b$ such that for all $\ep\in (0, 2/\pi)$ and for all $n$,
	 	\be{
	 		\pp\left( N_n(\R) \le \E N_n(\R) -\ep  \log n \right) \ge \P\left (N_n(\R)=0\right ) = n^{-b+o(1)}.
	 	} 
	 \end{theorem}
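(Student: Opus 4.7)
The plan is to recognize that Theorem~\ref{thm:D} is essentially a restatement of the persistence estimate of Dembo, Poonen, Shao, and Zeitouni~\cite{Dembopersistence}, with a trivial first step to connect persistence to the concentration event on the left. The two displayed inequalities are handled independently.

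For the first inequality $\pp(N_n(\R) \le \E N_n(\R) - \ep \log n) \ge \pp(N_n(\R) = 0)$, I would proceed by a pure set inclusion. Since $N_n(\R) \ge 0$, we have $\{N_n(\R) = 0\} \subseteq \{N_n(\R) \le t\}$ for any $t \ge 0$, so it suffices to check that $t := \E N_n(\R) - \ep \log n$ is nonnegative. By the classical asymptotic $\E N_n(\R) = (2/\pi + o(1)) \log n$ (originally due to Kac and extended to the non-Gaussian iid case by Erd\H{o}s-Offord and Ibragimov-Maslova), this holds for any fixed $\ep < 2/\pi$ and all $n$ larger than some threshold $n_0(\ep)$. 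For the finitely many exceptional $n$, a separate crude lower bound on $\pp(N_n(\R) = 0)$ (say, by fixing a favorable sign pattern for the $\xi_i$ so that $f_n$ is sign-definite on $\R$) suffices to absorb them into the constant $C$. For $\ep \ge 2/\pi$ the statement is vacuous or reduces to the small-$\ep$ regime, so I would treat small $\ep$ as the essential case.

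For the second inequality $\pp(N_n(\R) = 0) \ge n^{-C}/C$, I would directly invoke the main persistence estimate of~\cite{Dembopersistence}: under the assumption that the iid coefficients have all moments finite, the probability that $f_n$ has $o(\log n/\log \log n)$ real roots is at least $n^{-C}/C$ for some $C$ depending only on the law of $\xi_1$. In particular, choosing the appropriate parity of $n$ and sign of $\xi_n$, the persistence event $\{f_n(x) > 0 \text{ for all } x \in \R\}$ is contained in $\{N_n(\R) = 0\}$ and has polynomially small probability, giving the desired bound.

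The ``main obstacle,'' strictly speaking, has already been overcome in~\cite{Dembopersistence}: the substantive content is the polynomial (as opposed to the earlier $n^{-o(1)}$-type) lower bound for persistence of Kac polynomials, together with the identification of a favorable scenario that actually produces a polynomial with no real roots. Once this is in hand, the proof of Theorem~\ref{thm:D} is a one-line packaging argument combining the event inclusion above with the quoted persistence bound.
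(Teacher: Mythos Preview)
Your proposal is correct and matches the paper's approach exactly: the paper does not give a standalone proof of Theorem~\ref{thm:D} but simply presents it as an immediate corollary of the persistence estimate in~\cite{Dembopersistence}, which is precisely what you do. Your additional care about the range of $\ep$ and the parity of $n$ is appropriate, since the theorem as literally stated is only meaningful for $\ep<2/\pi$ and (for the middle term) for even $n$; the paper glosses over these points.
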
	
	 Note that the same holds true if we replace $N_n(\R)$ by $N_n([0, 1])$, $N_n([1, \infty))$, $N_n([-1, 0])$, and $N_n((-\infty, -1])$ where $N_n(S)$ denotes the number of roots of $f_n$ in the set $S$ and the constant $b$ replaced by $b/4$. 
	  The exponent $b$, known as the {\it persistence exponent}, was characterized in terms of the centered stationary Gaussian process $Y_t$ with correlation function $R_y(t) = \sech(t/2)$ by
	  $$b = -4\lim_{T\to\infty} \frac1T \log \P\left (\sup_{0\le t\le T} Y_t\le 0\right ).$$
 	  The exact value $b=\frac34$ was established in \cite[Equation 6]{poplavskyi2018exact} by Poplavskyi and Schehr (2018) using the connection between
the Kac polynomials and the truncated real
orthogonal ensemble of random matrices. Using a different method which related the process $Y_t$ to a translationally invariant
Pfaffian point process, FitzGerald, Tribe, and Zaboronski (2022) established the same value \cite[Proposition 5]{fitzgerald2022asymptotic}.  
	 In proving these results, the authors of \cite{Dembo} (and \cite{fitzgerald2022asymptotic, poplavskyi2018exact}) used the strong approximation results of Koml\'os-Major-Tusn\'ady which necessitated the conditions that the random variables are identically distributed and all  moments are finite. More general results for the class of generalized Kac polynomials were established in \cite{Dembo} by Dembo and Mukherjee (2015).

	  In \cite[Equation 5]{schehr2008real}, Schehr and Majumdar (2008) put forward the following conjecture which is strongly related to Conjecture \ref{conj}:
	  $$\P(N_n([0, 1]) = k) \propto n^{-\tilde \phi(k/\log n) },$$
	  where $k\approx c\log n$ for some fixed $c$. The large deviation function $\tilde \phi(x)$ was computed explicitly in \cite[Equation 51]{poplavskyi2018exact} (which can be found in the arxiv version {\url{https://arxiv.org/pdf/1806.11275.pdf}}). Its asymptotic behaviors were given in \cite[Equation 6]{poplavskyi2018exact}:
	  $$\tilde \phi(c) \sim \begin{cases}
	  \frac{3}{16} + c\log c, \quad c\to 0,\\
	  \frac{1}{2\sigma^{2}}\left (c - \frac{1}{2\pi}\right )^{2}, \quad \left |c - \frac{1}{2\pi}\right |\ll 1, \sigma^{2} = \frac{1}{\pi} - \frac{2}{\pi^2},\\
	  \frac{\pi^2}{4}c^2 - \frac{\log 2}{2}c, \quad c\to \infty.
	  \end{cases}$$

	 \vspace{2mm}
	 In this paper, we tackle the right-hand side of Conjecture \ref{conj} under a less stringent condition than \cite{Dembo}. For a constant $\ep_0>0$, we say that the random variables $\xi_i$ have uniformly bounded $(2+\ep_0)$-moments if there exists a constant $C_0$ such that for all $i$, 
	 $$\E |\xi_i|^{2+\ep_0}\le C_0.$$ 
	 The main result of our paper is to establish the right-hand side of Conjecture \ref{conj} for the bulks of roots. Consider sub-intervals of $[0, 1]$ of the form
	 \be{
	 	I_{n^a} = [1-n^{-a}, 1]
	 }
	 where $a$ is a positive constant. It is well-known (e.g. \cite{DOV, Ibragimov1968average}) that while the expected number of roots in $[0,1]$ is $\left (\frac{1}{2\pi}+o(1)\right )\log n$,  the complement $[0, 1-n^{-a})$ of $I_{n^a}$ only contains $O(a\log n)$ real roots, on average. So, if $a$ is a small number, $I_{n^a}$ indeed contains most of the roots in $[0, 1]$ (see Figure \ref{fig:1})). 
	 For these interval, we prove polynomial tail bounds.
	 \begin{theorem} \label{thm:l_a} Assume that the $\xi_i$ have uniformly bounded $(2+\ep_0)$-moments for some $\ep_0>0$. Let $a$ be any positive constant. Then for all $\ep>0$, there exist constants $c, C$ such that for all $n$,
	 	\[  \P \left( \left | N_{f_n}(I_{n^{a}})-  \E N_{f_n}(I_{n^a}) \right |\ge \ep \log n \right) \le C n^{-c}.\]
	 	The same result can be established for the other intervals $[-1, 0]$, $[1, \infty)$ and $(-\infty, -1]$ where the corresponding bulk intervals are $-I_{n^a}=\{-x: x\in I_{n^a}\}$, $I_{n^a}^{-1}:=\{x^{-1}: x\in I_{n^a}\}$ and $-I_{n^a}^{-1}$.
	 \end{theorem}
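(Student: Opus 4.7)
The plan is to decompose $N_{f_n}(I_{n^a})$ as, up to a small error, a sum of $\Theta(\log n)$ \emph{independent} sub-exponential contributions coming from dyadic scales in $I_{n^a}$, and then derive a polynomial tail via a sub-exponential Bernstein inequality. By the symmetry $x \mapsto -x$ (which in distribution only flips the signs of odd-indexed coefficients) and the palindromic symmetry $x \mapsto 1/x$ (which reverses the coefficient sequence), it is enough to handle $I_{n^a}$.

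First we fix a large constant $M$ depending only on $\ep_0$ and $a$, and partition $I_{n^a} = \bigcup_{k} J_k$ with $J_k = [1 - 2^{-Mk},\, 1 - 2^{-M(k+1)}]$ for $k$ ranging over $K = \Theta(\log n / M)$ consecutive integers. Using the Kac-Rice density $\rho(x) \asymp 1/(1-x^2)$ on $I_{n^a}$, each $J_k$ satisfies $\E N_{f_n}(J_k) = \Theta(M)$ and $\sum_k \E N_{f_n}(J_k) = \E N_{f_n}(I_{n^a}) + O(1)$.

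The key technical step is to localize each $N_{f_n}(J_k)$ so that it depends essentially only on a narrow band of coefficients. We write $f_n = F_k + R_k$ with $F_k(x) = \sum_{i \in W_k} \xi_i x^i$ where $W_k = [c\, 2^{Mk},\, C\, 2^{Mk}]$ is tuned to the effective degree for $J_k$; for $M$ large these windows are pairwise disjoint across $k$. The goal is to show that with probability $\ge 1 - n^{-C}$,
\[
\sup_{x \in J_k} |R_k(x)| \;\le\; \tfrac12 \inf_{x \in \cG_k} |F_k(x)|
\]
on a suitably fine net $\cG_k \subset J_k$. The lower bound uses a Kolmogorov-Rogozin-type anti-concentration for $F_k$, valid under the $(2+\ep_0)$-moment hypothesis, combined with a union bound over $\cG_k$; the upper bound uses Rosenthal's inequality on $R_k$, exploiting that on $J_k$ the factor $x^i$ damps the tail $i \gg 2^{Mk}$ while the head $i \ll 2^{Mk}$ contributes a variance that is made small by choosing $M$ large. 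A Rouch\'e/sign-count argument on $\cG_k$, together with a derivative bound ruling out zeros between grid points, then gives $N_{f_n}(J_k) = N_{F_k}(J_k)$ on this event.

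Since the windows $W_k$ are pairwise disjoint, the localized counts $N_{F_k}(J_k)$ are independent. An Offord-type estimate after rescaling $J_k$ to unit length gives a sub-exponential tail $\P(N_{F_k}(J_k) \ge t) \le C e^{-c t / M}$, so $S := \sum_k N_{F_k}(J_k)$ is a sum of independent sub-exponential variables with total variance $V = O(M \log n)$ and sub-exponential parameter $O(M)$. The Bernstein inequality then yields
\[
\P\Bigl(|S - \E S| \ge \tfrac12 \ep \log n\Bigr) \;\le\; 2\exp\Bigl(-c \min\Bigl(\tfrac{\ep^2 (\log n)^2}{V},\, \tfrac{\ep \log n}{M}\Bigr)\Bigr) \;\le\; n^{-c \ep^2 / M},
\]
and combining with the localization error above and $|\E S - \E N_{f_n}(I_{n^a})| = O(1)$ completes the argument. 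The main obstacle is the uniform localization: the $(2+\ep_0)$-moment anti-concentration for $F_k$ at a single point is fairly weak, and upgrading it to a uniform lower bound on $\cG_k$ while simultaneously obtaining an essentially matching uniform upper bound on $R_k$ requires a coupled choice of the window width $|W_k|$, the net spacing, and the constant $M$; once this balance is struck, the remainder of the proof is a bookkeeping application of standard tools.
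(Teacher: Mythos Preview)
Your approach is quite different from the paper's (which reduces to Gaussian coefficients via universality, compares the log-rescaled process to the stationary limit $Z_t$ with covariance $\sech((t-s)/2)$, and then invokes the Basu--Dembo--Feldheim--Zeitouni concentration for stationary Gaussian processes), but as written it has a genuine gap in the localization step.

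The problem is that you cannot simultaneously have (a) $F_k$ dominating $R_k$ uniformly on $J_k$ and (b) the coefficient windows $W_k$ pairwise disjoint. On $J_k=[1-2^{-Mk},\,1-2^{-M(k+1)}]$ the effective degree $1/(1-x)$ ranges over the entire span $[2^{Mk},\,2^{M(k+1)}]$. At the right endpoint $x=1-2^{-M(k+1)}$ one has $\Var f_n(x)\asymp 2^{M(k+1)}$, while with your window $W_k=[c\,2^{Mk},\,C\,2^{Mk}]$ the localized piece satisfies $\Var F_k(x)\asymp (C-c)2^{Mk}$; hence $\Var F_k(x)/\Var f_n(x)\asymp 2^{-M}$, so $R_k$ dominates $F_k$ there and the inequality $\sup_{J_k}|R_k|\le\tfrac12\inf_{\cG_k}|F_k|$ cannot hold with high probability. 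If instead you enlarge the window to cover the full range of effective degrees, say $W_k\supset[2^{Mk}/A,\,A\,2^{M(k+1)}]$, then $W_k$ and $W_{k+1}$ necessarily overlap (the right edge of $J_k$ and the left edge of $J_{k+1}$ share the same effective degree $2^{M(k+1)}$), destroying the independence you need for Bernstein. This is not a bookkeeping issue: after the change of variables $s=-\log(1-x)$ the normalized process has correlations $\approx\sech((t-s)/2)\sim 2e^{-|t-s|/2}$, so neighbouring length-$M$ blocks are genuinely correlated at level $e^{-M/2}$, and the zero counts on adjacent $J_k$ are not functions of disjoint coefficient sets. The paper avoids this obstruction entirely by never seeking block independence; it couples the whole Gaussian vector to the stationary limit via a Powers--St{\o}rmer bound on $\|\mathcal A^{1/2}-\mathcal B^{1/2}\|_F$ and transfers concentration from $N_Z$.
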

	 We in fact prove a more general result in Theorem \ref{thm:l_ml} for a larger class of bulk intervals where we allow $n^{a}$ to be replaced by any parameter $m$, depending on $n$, that satisfies $(\log n)^{A}\ll m\ll n$. 
		 \begin{figure}[h!]\label{fig:1}
		\includegraphics[width=.8\textwidth]{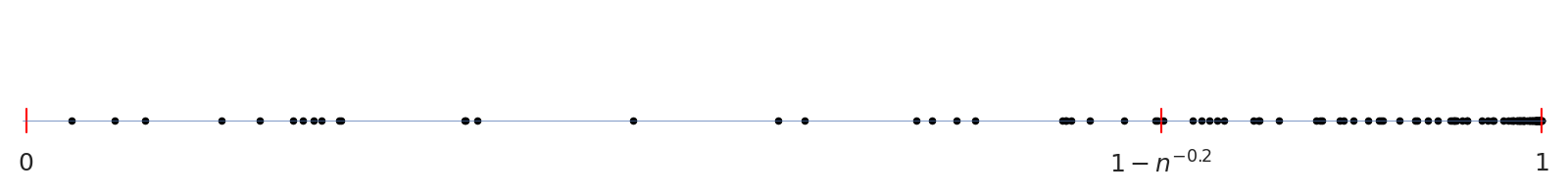}
		\caption{Real roots in $[0, 1]$ of 100 sampled Kac polynomials of degree 1000 and standard Gaussian coefficients. The real roots concentrate in the bulk $[1-n^{-0.2}, 1]$.}
	\end{figure}
	
	 \vspace{5mm}
	
	Moving to the whole real line, Theorem \ref{thm:l_a} (and Theorem \ref{thm:l_ml}) allows us to show that $n^{-\Theta(1)}$ is in fact the right order of the lower tail. 	 
	\begin{theorem}  \label{thm:lower}
	 	Assume that the $\xi_i$ have uniformly bounded $(2+\ep_0)$-moments for some $\ep_0>0$. Then for any $\ep>0$, there exist positive constants $c$ and $C$ such that for all $n$,
	 	\be{
	 		\pp\left( N_n(\R) \le \E N_n(\R) -\ep  \log n \right)  \le Cn^{-c}.
	 	} 
	 	The same holds if we replace $N_n(\R)$ by $N_n([0, 1])$, $N_n([1, \infty))$, $N_n([-1, 0])$, and $N_n((-\infty, -1])$.
	 \end{theorem}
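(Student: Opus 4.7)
The plan is to deduce Theorem~\ref{thm:lower} from the bulk estimate in Theorem~\ref{thm:l_a} by using the fact that the expected number of real roots \emph{outside} the bulk intervals is only $O(a \log n)$, a quantity we can make arbitrarily small by choosing the bulk parameter $a$ small.

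First I would split the real line into the four pieces $[0,1]$, $[1,\infty)$, $[-1,0]$, $(-\infty,-1]$. Since these overlap only at $\pm 1$, which contribute at most $2$ real roots, we have $N_n(\R) = \sum_{i=1}^4 N_n(S_i) + O(1)$ and similarly for the expectations. Hence on the event $\{N_n(\R) \le \E N_n(\R) - \ep \log n\}$ some $i$ must satisfy $N_n(S_i) \le \E N_n(S_i) - (\ep/5)\log n$ for all $n$ large, so by a union bound it suffices to prove the one-sided analogue of Theorem~\ref{thm:lower} for each of the four intervals separately.

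Consider for instance $S = [0,1]$, and let $a$ be a small positive constant to be chosen. Decompose $S = I_{n^a} \cup [0, 1-n^{-a})$; since root counts are nonnegative and $I_{n^a} \subset S$, we get $N_n(S) \ge N_n(I_{n^a})$. Moreover, from the cited estimates (\cite{DOV, Ibragimov1968average}) the expected number of real roots in $[0, 1-n^{-a})$ is bounded by $C_1 a \log n$ for an absolute constant $C_1$, so $\E N_n(S) \le \E N_n(I_{n^a}) + C_1 a \log n$. Choosing $a = \ep/(10 C_1)$, we obtain
\[
    \left\{ N_n(S) \le \E N_n(S) - \tfrac{\ep}{5}\log n \right\}
    \subseteq
    \left\{ N_n(I_{n^a}) \le \E N_n(I_{n^a}) - \tfrac{\ep}{10}\log n \right\},
\]
and Theorem~\ref{thm:l_a} bounds the probability of the right-hand event by $Cn^{-c}$.

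The remaining three intervals $[1,\infty)$, $[-1,0]$, $(-\infty,-1]$ are handled identically using the analogous bulk intervals $I_{n^a}^{-1}$, $-I_{n^a}$, $-I_{n^a}^{-1}$ provided by Theorem~\ref{thm:l_a}, together with the analogous bounds on the expected number of roots in their complements (e.g., via the reciprocal symmetry $x \leftrightarrow 1/x$, which turns roots of $f_n$ in $[1,\infty)$ into roots of the reversed polynomial in $(0,1]$). Summing the four polynomial tail bounds gives the desired conclusion. There is no serious obstacle here beyond bookkeeping: Theorem~\ref{thm:l_a} does the real work, and the key conceptual point is simply that the non-bulk roots, being nonnegative, can only \emph{increase} $N_n(S_i)$, so they pose no threat to the lower tail once their expected contribution is controlled.
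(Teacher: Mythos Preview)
Your proposal is correct and follows essentially the same route as the paper: reduce to each of the four intervals by symmetry, use the trivial inequality $N_n(S)\ge N_n(\text{bulk})$, compare expectations via the $O(a\log n)$ bound on roots outside the bulk, and invoke the bulk concentration (you cite Theorem~\ref{thm:l_a}; the paper uses the slightly more general Theorem~\ref{thm:l_ml} with $m=n^{\ep/4}$, $\ell=3\log n$ and obtains the expectation comparison via Kac--Rice plus \eqref{uni:real} rather than by citing \cite{DOV, Ibragimov1968average}). These differences are cosmetic.
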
	
	 
	 For the upper tail, we show the following sub-optimal upper bound.
	 \begin{theorem} \label{thm:upper} [Upper tail]
	 	Assume that the $\xi_i$ have uniformly bounded $(2+\ep_0)$-moments for some $\ep_0>0$. Then for any $\ep>0$, there exist positive constants $c$ and $C$ such that for all $n$,
	 	\be{
	 		\pp\left( N_n(\R)  \geq \E N_n(\R) +\ep  \log n \right) \leq C\exp \left(-c \sqrt{\log n}\right).
	 	} 
	 	The same holds if we replace $N_n(\R)$ by $N_n([0, 1])$, $N_n([1, \infty))$, $N_n([-1, 0])$, and $N_n((-\infty, -1])$.
	 \end{theorem}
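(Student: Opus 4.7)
The plan is to reduce everything to $[0,1]$ via the standard symmetries of Kac polynomials: $x\mapsto 1/x$ sends $f_n$ to $x^{-n} g_n(x)$ with $g_n(y)=\sum_i \xi_{n-i} y^i$, a Kac polynomial whose coefficients satisfy the same hypotheses, bijecting roots in $[1,\infty)$ with roots in $(0,1]$; and $x\mapsto -x$ replaces $\xi_i$ by $(-1)^i \xi_i$, converting the negative half-line into the positive one. Thus each of the five assertions in the theorem follows from the corresponding upper tail for $N_n([0,1])$ (the $\R$ version via a final union bound over the four half-line intervals, with $\ep$ replaced by $\ep/4$).

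Fix $a=a(\ep)>0$ small and decompose
\[
N_n([0,1]) = N_n(I_{n^a}) + N_n\bigl([0,1-n^{-a}]\bigr).
\]
Known expectation estimates (see, e.g., \cite{DOV, Ibragimov1968average}) give $\E N_n([0,1-n^{-a}])\le C_0 a\log n$, so choosing $a$ sufficiently small ensures $\E N_n([0,1]) - \E N_n(I_{n^a})\le \ep \log n / 8$. Theorem~\ref{thm:l_a} then supplies the bulk bound $\P(N_n(I_{n^a})\ge \E N_n(I_{n^a})+\ep\log n/4)\le Cn^{-c}$, and the theorem reduces to the non-bulk estimate
\[
\P\bigl(N_n([0,1-n^{-a}])\ge \ep\log n/2\bigr)\le C\exp\bigl(-c\sqrt{\log n}\bigr).
\]

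For this non-bulk bound, partition $[0,1-n^{-a}]$ into $K\asymp a\log n$ dyadic intervals $J_k=[1-2^{-k+1},1-2^{-k}]$. On each $J_k$ apply Jensen's formula on nested complex disks centered at a real point $z_k$ at distance of order $2^{-k}$ from $J_k$, with inner radius $r_k$ and outer radius $R_k$ of the same order:
\[
N_{f_n}(J_k)\le C\,\log\bigl(M_k/|f_n(z_k)|\bigr),\qquad M_k:=\sup_{|z-z_k|\le R_k}|f_n(z)|.
\]
An upper tail bound on $M_k$ follows from the second moment $\E|f_n(z)|^2 = \sum_i |z|^{2i}\asymp 2^k$ on the disk, combined with a standard net argument, while a lower tail bound on $|f_n(z_k)|$ comes from the Kolmogorov--Rogozin concentration-function inequality supplemented by a Berry--Esseen-type comparison valid under $(2+\ep_0)$-moments, yielding a small-ball estimate of the form $\P(|f_n(z_k)|\le 2^{k/2}\lambda)\le C(\lambda + 2^{-c_0 k})$.

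The principal obstacle is the parameter tuning needed to reach the sub-polynomial rate $\exp(-c\sqrt{\log n})$. A naive union bound over the $K\asymp \log n$ dyadic levels with a single scale $T$ produces a total count of order $KT$ with failure probability $O(Ke^{-T})$, and one cannot arrange $KT\le \ep\log n$ and $Ke^{-T}\le \exp(-c\sqrt{\log n})$ simultaneously. The workaround is to treat the sum $\sum_k[\log M_k - \log|f_n(z_k)|]$ directly: its expectation is of order $K = \Theta(\log n)$, and one aims to establish a sub-exponential concentration estimate for this sum of weakly dependent heavy-tailed terms via a Bernstein- or Efron--Stein-type argument. Carrying this out under only $(2+\ep_0)$-moments, and in particular handling the small-$k$ regime where the Berry--Esseen error is weakest and the cross-level dependence on $z_k$ is most delicate, is the principal technical effort --- and is presumably what restricts the method to the sub-polynomial rate $\exp(-c\sqrt{\log n})$ rather than a polynomial one.
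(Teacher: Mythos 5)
Your reduction to $[0,1]$ and your use of the bulk theorem are fine, but the non-bulk half of your argument has a genuine gap, and you have in fact put your finger on it yourself. Cutting at $1-n^{-a}$ leaves $K\asymp a\log n$ dyadic intervals in the edge, and as you observe, a per-interval Jensen bound cannot be assembled by a union bound: to keep the total count below $\ep\log n$ each interval may contribute only $O(1)$, which forces a per-interval failure probability of constant order. Your proposed repair --- sub-exponential concentration for $\sum_k[\log M_k-\log|f_n(z_k)|]$ via a Bernstein- or Efron--Stein-type argument --- is not carried out, and it is far from routine: the $K$ summands are all functions of the same coefficients $\xi_i$, the lower tail of $|f_n(z_k)|$ under only $(2+\ep_0)$-moments gives $-\log|f_n(z_k)|$ tails you do not control well enough for a Bernstein inequality (your own small-ball bound has an additive error $2^{-c_0k}$ that is of constant order for small $k$), and even granting independence and sub-exponential tails one would land at a polynomial rate, not at the specific $\exp(-c\sqrt{\log n})$ claimed. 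So the central step of the proof is missing.

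The paper closes this gap by moving the cut point, not by proving concentration of the Jensen sum. It sets $m=e^{\sqrt{\ep_1\log n}}$, applies the bulk estimate \eqref{eq:Im} of Theorem \ref{thm:l_ml} to $[1-1/m,1]$ (legitimate since $m\gg(\log n)^A$), and is left with an edge $[0,1-1/m]$ containing only $K=\lfloor\log_2 m\rfloor=\Theta(\sqrt{\log n})$ dyadic intervals. Each such interval is then allowed a \emph{generous} budget of $O(\log m)=O(\sqrt{\log n})$ roots: Jensen's inequality bounds the count by $\tfrac{1}{\log(5/4)}\log(M_j/m_j)$, the event $M_j\ge m^3$ is excluded by a Chebyshev bound on the coefficients, and the event $m_j=|f_n(x_{j+1})|\le m^{-3}$ is excluded by the lacunary Littlewood--Offord anti-concentration lemma (Lemma \ref{lm:anti}), which holds uniformly over all dyadic scales under exactly the $(2+\ep_0)$-moment hypothesis and replaces your Berry--Esseen/Kolmogorov--Rogozin route. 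The total is then $O((\log m)^2)=O(\ep_1\log n)\le\ep\log n/2$ after choosing $\ep_1$ small, and the union bound over $\Theta(\sqrt{\log n})$ intervals, each failing with probability $m^{-c}$, yields exactly the rate $\exp(-c\sqrt{\log n})$ --- this trade-off $(\log m)^2\lesssim\ep\log n$ versus failure probability $m^{-c}$ is the true origin of the sub-optimal exponent, rather than the Berry--Esseen losses you anticipated.
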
	
 In the next section, we will outline several key difficulties in targeting the conjectured polynomial upper-tail bounds. We believe that new tools and ideas are required to resolve these problems.

	\subsection{Related literature}\label{sec:lit}

Aside from the Kac polynoials, in \cite{Basuconcentration}, Basu, Dembo, Feldheim and Zeitouni showed concentration of the number of zeros of stationary Gaussian processes in a given interval $[0, T]$.

In \cite{nguyen2019exponential}, Hoi Nguyen and Zeitouni studied random trigonometric polynomials 
$$P_n(x) = \sum_{k=1}^{n} \xi_k \cos(kx)+\xi'_k \sin(kx)$$
and showed that 
$$\P(|N_n[0, 2\pi] -\E N_n[0, 2\pi]|\ge \ep\E N_n)\le \exp(-cn)$$
where $N_n[0, 2\pi]$ is the number of real roots in $[0, 2\pi]$ and it is known that $\E N_n =\Theta(n)$. In \cite{nguyen2023concentration}, Hoi Nguyen showed concentration for the number of intersections between random eigenfunctions of general eigenvalues and a given smooth curve on flat tori.

\subsection{Difficulties and main techniques} Consider the bulk intervals $I = [1-m^{-1}, 1]$ and the ``edge" intervals $J = [0, 1 - m^{-1})$. In the following, we outline several key difficulties and our approaches for each intervals.
\begin{itemize}
	\item The bulk intervals $I = [1-m^{-1}, 1]$. In the aforementioned works \cite{Basuconcentration}, \cite{nguyen2023concentration} and \cite{nguyen2019exponential}, the number of real roots are, roughly speaking, equally spaced out. For instance, for a given interval $[0, T]$, the expected number of real roots of a stationary Gaussian process is $\Theta(T)$; and in fact, if we divide $[0, T]$ into $\Theta(T)$ equal sub-intervals, each of which contains $\Theta(1)$ roots. The same holds for random trigonometric polynomials and random nodal intersections. The roots of the Kac polynomials, on the other hand, are not equally spaced. Indeed, the interval $[0, 1- \frac1C]$ only contains $\Theta(1)$ real roots, while for each $\delta\gg 1/n$, the dyadic interval $[1-\delta, 1-\delta/2]$ also has $\Theta(1)$ roots.
	
	 Nevertheless, for the bulk intervals, we demonstrate that when appropriately rescaled, the roots are not significantly different from those of a stationary Gaussian process $(Z_t)$. This allows us to utilize the results in \cite{Basuconcentration} (see Lemma \ref{lm:donz}). More specifically, we divide the bulk intervals into tiny intervals and approximate the number of real roots in each of those intervals by the number of sign changes at the end points. Thanks to \cite{Basuconcentration}, we obtain the concentration of the latter for $(Z_t)$. The challenge now lies in demonstrating that the distribution of the number of sign changes for $f_n$ closely resembles that of $(Z_t)$ (see Lemma \ref{lm:ngzs}) which turned out to be quite challenging to get the right order of the error terms. 
	 
	 To this end, we first use universality results in \cite{nguyenvuCLT} to narrow the question down to the case where $f_n$ has Gaussian coefficients. This leads us to compare the distributions of the number of sign changes of two long Gaussian vectors whose covariance matrices, denoted by A and B, are close entry-wise. We propose an elegant argument that reduces to showing $\mathcal A^{1/2} - \mathcal B^{1/2}$ is small in Frobenius norm which, in turn, is bounded by the nuclear norm of $\mathcal A-\mathcal B$ by Powers–St{\o}rmer inequality.

	\item The edge intervals $J = [0, 1 - m^{-1})$. The aforementioned method proves ineffective for edge intervals where the polynomial $f_n$ deviates significantly from stationarity. Moreover, considering each $x\in [0, 1]$, the terms $\xi_i x^{i}$ that make a substantial contribution to the sum are those with $i\in \{0, 1, \dots, \frac{O(1)}{1-x}\wedge n\}$. For instance, when $x\le 1 - 1/C$, only a finite number of terms are influential.
	
	To attain an error term of order $m^{-c}$ for the dyadic interval centered around $x$, it would necessitate this interval containing a growing function of $m$ real roots. We do not see how to achieve it considering there are $\Theta(\log m)$ dyadic intervals, which is the same for the total number of roots in $J$. We run a soft argument to show that each of the dyadic intervals can only contain $O(\log m)$ real roots with probability $1 - m^{-c}$ and so the whole edge interval contains at most $O(\log m)^{2}$ real roots. 
	
\end{itemize}

	\subsection{Outline} In Section \ref{sec:prep}, we provide preliminaries. In Section \ref{sec:bulk}, we state and prove a more general result for the bulk intervals.  In Section \ref{sec:bulk2}, we prove Theorems \ref{thm:l_a} and \ref{thm:lower}. Finally, in Sections \ref{sec:edge}, we prove Theorem \ref{thm:upper}.

		\vspace{5mm}
	{\bf Notations.} 
	We use standard asymptotic notations under the assumption that $n$ tends to infinity.  For two positive  sequences $(a_n)$ and $(b_n)$, we say that $a_n \gg b_n$ or $b_n \ll a_n$ if there exists a constant $C$ such that $b_n\le C a_n$.  
	If $a_n\ll b_n\ll a_n$, we say that $b_n=\Theta(a_n)$.  If $\lim_{n\to \infty} \frac{a_n}{b_n} = 0$, we say that $a_n = o(b_n)$.  We also write that $a_n=O_C(b_n)$ to emphasize that the implied constant depends on a given parameter $C$.
	
	\section{Preliminaries}\label{sec:prep}
	\subsection{Kac-Rice formula} This is a well-known formula to calculate the real root density of random polynomials. It relies on the fact that for any smooth functions $f$ without double roots, for any $a, b$ at which $f$ does not vanish, it holds that
	$$N_{f} ([a, b])= \lim_{\ep\to 0}\frac{1}{2\ep}\int_{a}^{b} |f'(x)|\textbf{1}_{|f(x)|\le \ep} dx.$$
	In the case that $f$ is a Gaussian process then $(f(x), f'(x))$ is a Gaussian vector and so taking expectation both sides gives ( ref. \cite{Kac1943average}, \cite[Theorem 2.5]{farahmand1996})
	\begin{equation}\label{eq:KR}
\int_{a}^{b} \rho_1(x)dx :=	\E N_{f}([a, b])  = \frac1\pi \int_{a}^{b}  \frac{\sqrt{P(x)Q(x) -R(x)^2}}{P(x)} dt,
	\end{equation}
	where $\rho_1$ is the real-zero density and  
	\ba{
		P(x):= \E (f(x)^{2}), Q(x):= \E (f'(x)^{2}), \text{ and }R(x) := \E(f(x)f'(x)).
	}
Applying this formula to $f = f_n$ with Gaussian coefficients, we get 
\begin{eqnarray}\label{eq:KR:Kac}
\rho_1(x) &=&\frac1\pi\sqrt{\frac{1}{(1-x^2)^2}-\frac{(n+1)^{2} x^{2n}}{(1-x^{2n+2})^2}}.
\end{eqnarray}
We refer also to \cite{EK} for a nice geometric interpretation of this formula.
	\subsection{Local universality} This is a useful property that allows us to reduce questions on general distributions of the $\xi_i$ to the case when they are standard Gaussian (or any other distributions with mean 0, variance 1, and bounded $(2+\ep_0)$ moments). We refer to \cite{nguyenvuCLT, nguyenvurandomfunction17} and \cite{TVpoly} and the references therein for more detailed discussions.  Here, we summarize the local universality results for the Kac polynomials that we will use later. 
	
	Firstly, the number of real roots on the real line is known to be universal, up to an error term of order $O(1)$:
	\begin{equation} 
	|\E N_{f_n}(\R) - \E_{G} N_{f_n}(\R)|=O(1)\nonumber
	\end{equation}
	where the subscript $G$ denotes the distribution when the coefficients  $(\xi_i)_{i\geq 0}$ are i.i.d. standard Gaussian random variables.
	
	This was established in \cite[Corrolary 2.6]{DOV} where its authors decomposed the real line into dyadic intervals, showed that for each of these interval, the difference in the mean number of real roots is small and then used triangle inequality. One can directly read from their proof that this holds for any interval $S\subset \R$, which means
	\begin{equation}\label{uni:real}
	|\E N_{f_n}(S) - \E_{G} N_{f_n}(S)|=O(1).
	\end{equation}
	
	Beyond the expectation, we also need a universality property on the distribution of the number of real roots in the bulk intervals. The following was proved in \cite[Theorem 2.1]{nguyenvuCLT}.
	\begin{theorem}\label{thm:uni:p}
		Assume that the $\xi_i$ have uniformly bounded $(2+\ep_0)$-moments for some $\ep_0>0$. There exist positive constants $c$ and $C$ such that for every number $0\le b_n<a_n<1$ satisfying $a_n\le (\log n)^{A}$ for all constant $A$ and for all sufficiently large $n$ (depending on $A$), for every function $F:\R \to \R$ whose derivatives up to order 3 are bounded by $1$, we have
		\ben{ 
			\Big |\E F\left( N_{f_n}(1-a_n, 1-b_n)\right) - \E_{G} F\left( N_{f_n}(1-a_n, 1-b_n)\right) \Big | \leq C a_n^{c}+Cn^{-c},\notag
		}
	\end{theorem}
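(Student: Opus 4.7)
The plan is a Lindeberg-type swapping argument applied to a smoothed version of the counting functional. Since $N_{f_n}(1-a_n, 1-b_n)$ is integer-valued and thus not a smooth function of the coefficients, I first replace it by the Kac--Rice-style surrogate
$$T_\delta := \int_{1-a_n}^{1-b_n} \frac{1}{2\delta}\, \phi\!\left(\frac{f_n(x)}{\delta}\right) g\bigl(f'_n(x)\bigr)\, dx,$$
where $\phi$ is a smooth bump of unit mass supported in $[-1,1]$ and $g$ is a smooth approximation of $|\cdot|$. With $\delta$ chosen as a small negative power of $n$, the Kac--Rice identity together with a joint anti-concentration bound for $(f_n(x), f'_n(x))$ and a ``no near-double-roots'' estimate let me bound the expected discrepancy $\E|F(N_{f_n}) - F(T_\delta)|$ by $Cn^{-c}$, uniformly over both the original distribution and the Gaussian one.

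Next I would telescope by swapping one coefficient at a time. Let $(\eta_i)$ be iid standard Gaussians independent of $(\xi_i)$, and for $0 \le i \le n+1$ form the hybrid vector $\zeta^{(i)} = (\xi_0, \ldots, \xi_{i-1}, \eta_i, \ldots, \eta_n)$; then
$$\E F(T_\delta(\xi)) - \E F(T_\delta(\eta)) = \sum_{i=0}^n \bigl(\E F(T_\delta(\zeta^{(i+1)})) - \E F(T_\delta(\zeta^{(i)}))\bigr).$$
For each $i$, I Taylor-expand $F \circ T_\delta$ in the $i$-th coordinate to third order. Because $\xi_i$ and $\eta_i$ share their first two moments, the zeroth-, first-, and second-order terms cancel in expectation, so the remainder is bounded by
$$\sup \bigl|\partial_i^3 (F \circ T_\delta)\bigr| \cdot \bigl(\E|\xi_i|^{2+\eps_0} + \E|\eta_i|^{2+\eps_0}\bigr).$$
Since $\partial_{\xi_i} f_n(x) = x^i$, the chain rule together with the fact that the derivatives of $F$ are bounded by $1$ produces derivative bounds that decay rapidly in $i$ once $i \gg 1/a_n$; summing in $i$ and integrating in $x$ over $(1-a_n, 1-b_n)$ should produce the $C a_n^c$ factor, while the smoothing step contributes the $Cn^{-c}$ part.

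The main obstacle is balancing the smoothing scale $\delta$ against the Lindeberg remainders: the derivatives $\partial_i^k (F \circ T_\delta)$ grow like $\delta^{-k}$, while the smoothing error shrinks as $\delta$ does, and one must tune $\delta$ so that both contributions fit within $Ca_n^c + Cn^{-c}$. The $a_n$ dependence is extracted from the fact that $f_n(x)$ has variance $\asymp (1-x^2)^{-1} \asymp 1/a_n$ for $x \in (1-a_n, 1-b_n)$, so an Esseen-type anti-concentration bound (valid under the $(2+\eps_0)$ moment assumption) yields $\E\, \phi(f_n(x)/\delta) = O(\delta \sqrt{a_n})$, and only the first $O(1/a_n)$ coefficients make a non-negligible contribution to either $f_n(x)$ or $f'_n(x)$ on this interval. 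Carrying through this accounting, and using the expectation-level universality \eqref{uni:real} as a sanity check on the Gaussian side, should give the stated bound.
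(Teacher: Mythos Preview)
The paper does not prove Theorem~\ref{thm:uni:p} itself; it quotes the statement from \cite[Theorem~2.1]{nguyenvuCLT} and uses it as a black box (see the sentence immediately preceding the theorem in Section~\ref{sec:prep}). So there is no in-paper argument to compare your proposal against.

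Your Lindeberg-swapping outline is nonetheless the correct general strategy, and it is essentially the one carried out in \cite{nguyenvuCLT}, building on \cite{TVpoly} and \cite{nguyenvurandomfunction17}: replace the root count by a smooth functional of the coefficients, then swap one coefficient at a time, using moment matching to kill the low-order Taylor terms. Two places in your sketch would need tightening before it becomes a proof. First, bounding the third-order Taylor remainder directly by $\E|\xi_i|^{2+\ep_0}$ is not correct as written: a third-order expansion produces $\E|\xi_i|^3$, which you have not assumed finite. The standard remedy is to truncate $\xi_i$ at a level growing slowly with $n$, use the $(2+\ep_0)$-moment bound to control the tail, and run the swap on the truncated variables (or, equivalently, interpolate between a second-order remainder controlled by the variance and a third-order remainder on the event $\{|\xi_i|\le K\}$). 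Second, the published proofs do not work with a single global functional $T_\delta$ over the whole interval; they decompose $(1-a_n,1-b_n)$ into many short pieces and compare $f_n$ to a local Gaussian model on each, after rescaling so that only $O(1)$ (rescaled) coefficients matter per piece. That localization is what makes the sum over $i$ of the third-derivative bounds tractable and is where the $a_n^{c}$ factor is actually extracted. Your global $T_\delta$ approach may also go through after enough bookkeeping, but the balancing you flag as ``the main obstacle'' is precisely the nontrivial content of the cited reference, not something that falls out of the sketch as written.
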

	From here, for any interval $[a, b]\subset \R$, let $F$ be a smooth function with values in $[0, 1]$ and $\textbf{1}_{[\lceil a\rceil, \lfloor  b\rfloor]}\le F\le \textbf{1}_{(\lceil a\rceil -1, \lfloor  b\rfloor +1)}$. We note that for $N\in \N$, $F( N) = \textbf{1}_{N\in [a, b]}$. Thus,
	\ben{ 
		\Big |\P\left( N_{f_n}(1-a_n, 1-b_n) \in [a,b]\right) - \P_{G} \left( N_{f_n}(1-a_n, 1-b_n) \in [a,b]\right) \Big | \leq Ca_n^{c}+Cn^{-c}.\label{uni:p}
	}

	\section{The bulk}\label{sec:bulk} In this section, we study the bulk intervals. We consider sub-intervals of $[0, 1]$ of the form
	\be{
		I_{m,\ell} = [1-1/m, 1 -\ell/n]
	}
	where $m$ and $\ell$ are large parameters growing with $n$. For the interval to be non-empty, it is necessary that $m\le n/\ell$. By the Kac-Rice formula, the density of the real roots in the interval $[1-1/C, 1-1/n]$, when the coefficients are standard Gaussian, is given by
	$$\rho_1(x)= \frac{\Theta(1)}{1-x},$$
	and in the complement $[0, 1]\setminus I_{m, \ell}$, the mean number of real roots is $O( \log m+\log \ell)$. If this number is at most $a\log n$ where $a$ is a small constant, we call the interval $I_{m, \ell}$ a {\it bulk interval}. 
	
	The typical example of bulk intervals to keep in mind is $m = n^{\alpha}$ and $\ell = C\log n$ for some constants $\alpha$ and $C$.
	In general, we assume the following conditions.
	\begin{itemize}
		\item $m, \ell$ are not too big \begin{equation}\label{cond:ml1}
		m \ell \leq n^{1-\beta}, \quad \text{ for some constant } \beta>0.
		\end{equation}
		This guarantees that the interval $I_{m ,\ell}$ contains $\Theta(\log n)$ number of real roots;
		\item $m$ is not too small
		\begin{equation}\label{cond:ml2}
		m\gg (\log n)^{A}, \quad \text{ for some constant $A$};
		\end{equation}
		\item $\ell$ is not too small
		\begin{equation}\label{cond:ml3}
		\ell\ge 3\log n.
		\end{equation}
	\end{itemize}
The goal of this section is to show the following.
	\begin{theorem} \label{thm:l_ml} Assume that the $\xi_i$ have uniformly bounded $(2+\ep_0)$-moments for some $\ep_0>0$. Let $m \geq 1$ and $\ell \geq 1$ be real numbers satisfying Conditions \eqref{cond:ml1}, \eqref{cond:ml2}, and \eqref{cond:ml3}. 		 
		Then for any $\ep>0$, there exist positive constants $c$ and $C$ such that for all $n$,
		\begin{equation}\label{eq:Iml}
		\P \left( \Big| N_{n}(I_{m,\ell}) - \E N_{n}(I_{m,\ell}) \Big| \geq \ep \log n \right) \leq Cm^{-c}.
		\end{equation}
		Moreover,
		\begin{equation}\label{eq:Im}
		\P \left( \Big| N_{n}([1-1/m, 1]) - \E N_{n}([1-1/m, 1]) \Big| \geq \ep \log n \right) \leq Cm^{-c}.
		\end{equation}
	\end{theorem}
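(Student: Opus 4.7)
The plan is to reduce the general problem to the Gaussian case and then compare $f_n$ on a fine grid with the stationary Gaussian process $(Z_t)$ from \cite{Basuconcentration}.

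\textbf{Reduction to Gaussian coefficients.} By Theorem \ref{thm:uni:p} applied with $a_n=1/m$ and $b_n=\ell/n$, for any smooth cut-off $F:\R \to [0,1]$ with bounded derivatives up to order $3$ approximating the indicator of $[\mu_n-\ep\log n,\,\mu_n+\ep\log n]$, where $\mu_n:=\E N_{f_n}(I_{m,\ell})$, I obtain
\be{
	\bigl|\E F(N_{f_n}(I_{m,\ell}))-\E_G F(N_{f_n}(I_{m,\ell}))\bigr|\leq Cm^{-c}+Cn^{-c}.
}
Combined with \eqref{uni:real}, which bounds the discrepancy between $\mu_n$ and its Gaussian analogue by $O(1)$, this reduces \eqref{eq:Iml} to the case where the $\xi_i$ are i.i.d.\ standard Gaussian, at the cost of slightly shrinking $\ep$. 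The second assertion \eqref{eq:Im} will then follow from \eqref{eq:Iml}: by Kac--Rice \eqref{eq:KR:Kac}, the residual interval $[1-\ell/n,1]$ has expected root count $O(\log \ell)=O(\log\log n)$, and a dyadic union bound on Kac--Rice first moments, plus Markov, bounds its actual root count by $o(\log n)$ except on a set of probability $O(n^{-c})$.

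\textbf{Discretization and reduction to sign changes.} Assume henceforth that the coefficients are Gaussian. Introduce the logarithmic rescaling $x=1-e^{-s}/n$, mapping $I_{m,\ell}$ to an interval $[s_0,s_1]\subset \R$ of length $\Theta(\log n)$, and set $g(s):=f_n(x(s))/\sqrt{\Var f_n(x(s))}$. Choose a uniform grid $s_0=u_0<u_1<\cdots<u_N=s_1$ of step $\eta=m^{-c'}$ for a small $c'>0$, so $N=\Theta(m^{c'}\log n)$. Let $S_n$ denote the number of sign changes of $(g(u_j))_{j=0}^N$. A Kac--Rice first-moment estimate applied cell-by-cell together with Markov shows that $|N_{f_n}(I_{m,\ell})-S_n|\leq \ep\log n$ with probability $1-O(m^{-c})$. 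Thus it suffices to prove concentration of $S_n$.

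\textbf{Covariance comparison and stationary process coupling.} The asymptotic correlator of $g$ in the bulk regime, obtained from \eqref{eq:KR:Kac}, matches the scale-invariant covariance kernel $\kappa(s-t)$ of an explicit stationary Gaussian process $Z_s$. Let $\mathcal{A},\mathcal{B}$ denote the covariance matrices of $(g(u_j))_j$ and $(Z_{u_j})_j$, respectively; entrywise closeness $|\mathcal{A}_{jk}-\mathcal{B}_{jk}|=O(m^{-c''})$ is direct from \eqref{eq:KR:Kac}. By the Powers--St{\o}rmer inequality,
\be{
	\|\mathcal{A}^{1/2}-\mathcal{B}^{1/2}\|_F^{2}\leq \|\mathcal{A}-\mathcal{B}\|_{1}.
}
Writing $(g(u_j))=\mathcal{A}^{1/2}X$ and $(Z_{u_j})=\mathcal{B}^{1/2}X$ for a common standard Gaussian vector $X$, the $\ell^{2}$ discrepancy between the two vectors is controlled by $\|\mathcal{A}^{1/2}-\mathcal{B}^{1/2}\|_F$. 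Since $\mathcal{A}_{jj}=1$, each $g(u_j)$ has density bounded at $0$, so Gaussian anti-concentration yields that the two vectors agree in sign at all but $o(\ep\log n)$ coordinates with probability $1-O(m^{-c})$; hence the sign-change counts differ by at most $\ep\log n$. Applying the concentration bound of \cite{Basuconcentration} (in a discrete-sampling form) to $Z$ on a window of length $\Theta(\log n)$ then closes the argument.

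\textbf{Main obstacle.} The crux is converting entrywise closeness of $\mathcal{A}$ and $\mathcal{B}$ into the needed nuclear-norm bound $\|\mathcal{A}-\mathcal{B}\|_1 = O((\ep\log n)^{2})$ on a grid of size $N = \Theta(m^{c'}\log n)$: an entrywise-times-$N^{2}$ estimate is far too weak. This will force us to exploit the near-Toeplitz structure of both matrices in the $s$-variable and the off-diagonal decay of $\kappa$, together with a separate boundary treatment near the edges of $I_{m,\ell}$ where the stationarity approximation degrades; finding the right quantitative form of this trace-class bound is the delicate step.
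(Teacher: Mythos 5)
Your overall architecture matches the paper's: universality to reduce to Gaussian coefficients, the logarithmic change of variables, discretization of the bulk into $\Theta(m^{c'}\log n)$ cells, replacement of root counts by sign changes, coupling the two Gaussian vectors through square roots of their covariance matrices, Powers--St{\o}rmer, and the Basu--Dembo--Feldheim--Zeitouni concentration theorem for the limiting stationary process. But the step you single out as the main obstacle is not one, while two steps you treat as routine are where the genuine difficulties lie.

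On the non-obstacle: you do not need $\|\mathcal A-\mathcal B\|_{\mathrm{nuc}}=O((\ep\log n)^2)$, and no Toeplitz structure or off-diagonal decay is required. The crude chain $\|\mathcal A-\mathcal B\|_{\mathrm{nuc}}\le \sqrt{N}\,\|\mathcal A-\mathcal B\|_F\le N^{3/2}\max_{j,k}|a_{jk}-b_{jk}|$ combined with Powers--St{\o}rmer gives $\|\Gamma\|_F^2\ll N^{3/2}(1/m+n^{2}e^{-2\ell})$, and the entrywise error from Lemma \ref{lem:cnxy} is $O(1/m)$ once Condition \eqref{cond:ml3} kills the $n^{2}e^{-2\ell}$ term. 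Since $N=\Theta(m^{c'}\log n)$ with $c'$ at your disposal, applying Markov to $\sum_i(X_i-Y_i)^2$ with an anti-concentration threshold that is a small negative power of $m$ yields a bound that is itself a negative power of $m$ --- which is all the theorem asks for. This is exactly the paper's Lemma \ref{lm:ngzs} (with $\delta_m=m^{-1/6}$ and threshold $\delta_m^{4/3}$); the slack comes from comparing $N^{3/2}$ against $1/m$ rather than against $(\log n)^2$.

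On the genuine gaps: ``Kac--Rice first moments plus Markov'' cannot deliver either of the two reductions you invoke. (a) For the residual interval $[1-\ell/n,1]$ needed to pass from \eqref{eq:Iml} to \eqref{eq:Im}, Markov on the first moment gives only $\P(N_{f_n}([1-\ell/n,1])\ge \ep\log n)\ll \log\log n/\log n$, which is nowhere near the claimed $O(n^{-c})$, and is far from $m^{-c}$ when $m$ is a power of $n$; a dyadic union bound does not help, since each dyadic piece still carries $\Theta(1)$ expected roots. The paper needs genuinely higher-order input here (Lemmas \ref{lem:ngm} and \ref{lem:ngk}: Rolle's theorem applied through the $h$-th derivative with $h\asymp\log n$, Stirling estimates, and Gaussian small-ball at the endpoint) to reach $n^{-c}$. (b) Similarly, bounding $|N_{f_n}(I_{m,\ell})-S_n|$ requires controlling the probability that a single cell contains at least two roots, which is not a first-moment statement; the paper proves $\P_G(N_{f_n}([x,y])\ge 2)\ll \delta^{3/2}$ per cell via Rolle plus a third-moment bound on $f_n''$ (see \eqref{multipleroot}), together with the analogous estimate for $Z$ using Lemma \ref{lem:Z}(iii). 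Without quantitative substitutes for (a) and (b), the argument does not close.
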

	The same result can be established for the other intervals $[-1, 0]$, $[1, \infty)$ and $(-\infty, -1]$ where the corresponding bulk intervals are $-I_{m, \ell}$, $I_{m, \ell}^{-1}$ and $-I_{m, \ell}^{-1}$. To pass the results on $[0, 1]$ to the other three intervals, we note that the number of real roots of 
	$$f_n(x) = \sum_{i=0}^{n} \xi_i x^{i}$$
	in $[-1, 0]$ is the same as the number of real roots of $\hat f_n(x):= f_n(-x) = \sum_{i=0}^{n} (-1)^{i}\xi_i x^{i}$ which is just another Kac polynomial satisfying our conditions. Similarly, the number of real roots of $f_n$ in $[1, \infty)$ is the same as the number of real roots of $\bar f_n(x):= x^{n}f_n(1/x)=\sum_{i=0}^{n} \xi_{n-i} x^{i}$ which is also a Kac polynomial. Likewise for $(-\infty, -1]$.  
	\subsection{Limiting Gaussian process} 
In preparation for the proof of Theorem \ref{thm:l_ml}, we will prove in this section that the scaling limit of the Kac polynomial $f_n(t)$ is a stationary Gaussian process $(Z_t)$. This will allow us to pass the concentration problem for the number of real roots of $f_n$ to that of $Z$.\\	
For any $x,y \in [0,1]$, we define the normalized covariance of $f_n(x), f_n(y)$ to be
\be{
	c_n(x,y)= \frac{\Cov(f_n(x),f_n(y))}{\sqrt{\Var(f_n(x)) \Var(f_n(y))}}.
}
Then by simple computations, we have 
\be{
	c_n(x,y) = \frac{G(x^n,y^n)}{G(x,y)},
}
where 
\be{
	G(x,y) = \frac{1-xy}{\sqrt{(1-x^2)(1-y^2)}}.
}	
The following lemma shows that the limit of $c_n$ is the $\sech$ function:
$$\sech(a)= \frac{1}{\cosh(a)}=\frac{2}{e^a+e^{-a}}\le 1.$$
\begin{lemma} \label{lem:cnxy}
	Let $m\ge 2, \ell\ge 1$ be real numbers satisfying $m \leq n/ \ell$. There exists a positive constant $C$ such that for all $x, y\in I_{m, \ell}$, it holds that
	\be{
		\left |c_n(x,y)-\sech\left (\frac{t-s}{2}\right ) \right | \leq C[m^{-1}+n^{2}e^{-2\ell}], 
	}
	where $s=\log (1-x)$ and $t=\log (1-y)$.	
\end{lemma}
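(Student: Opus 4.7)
The plan is to write $c_n(x,y) = G(x^n, y^n)/G(x, y)$ and, after the substitution $e^s = 1-x$, $e^t = 1-y$, estimate the numerator and denominator of this ratio separately.

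For the denominator, the identities $1-x^2 = e^s(2-e^s)$, $1-y^2 = e^t(2-e^t)$, and $1-xy = e^s + e^t - e^{s+t}$ give directly
\[
G(x,y) = \frac{2\cosh((t-s)/2) - e^{(s+t)/2}}{\sqrt{(2-e^s)(2-e^t)}}.
\]
Since $x, y \geq 1 - 1/m$ forces $e^s, e^t \leq 1/m$, I would factor $2\cosh((t-s)/2)$ out of the numerator and $2$ out of the denominator; using $\cosh \geq 1$, each residual factor is of the form $1 + O(1/m)$. Inverting then yields $G(x,y)^{-1} = \sech((t-s)/2)\,(1 + O(1/m)) = \sech((t-s)/2) + O(1/m)$, using $\sech \leq 1$ to absorb the $\sech$ factor in the error.

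For the numerator, first note that $1-x \geq \ell/n$ together with the elementary bound $\log(1-u) \leq -u$ gives $x^n \leq e^{-\ell}$, and likewise $y^n \leq e^{-\ell}$. The algebraic identity $(1-x^ny^n)^2 - (1-x^{2n})(1-y^{2n}) = (x^n - y^n)^2$ then produces
\[
G(x^n,y^n) - 1 = \frac{(x^n - y^n)^2}{\sqrt{(1-x^{2n})(1-y^{2n})}\,\bigl[1 - x^ny^n + \sqrt{(1-x^{2n})(1-y^{2n})}\bigr]} = O(e^{-2\ell}),
\]
since the denominator is bounded below by a constant. This is in particular $O(n^2 e^{-2\ell})$. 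Combining the two estimates gives $c_n(x,y) = (1 + O(e^{-2\ell}))(\sech((t-s)/2) + O(1/m)) = \sech((t-s)/2) + O(m^{-1} + e^{-2\ell})$, which implies the claim.

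The main subtlety lies in the denominator step: when $|t-s|$ is of order $\log n$, $\cosh((t-s)/2)$ can be polynomially large in $n$, so the naive additive approximation $G(x,y) \approx \cosh((t-s)/2)$ would produce an unusable error of size $\cosh((t-s)/2)/m$. The resolution is to keep the approximation multiplicative until after the inversion, where the trivial inequality $\cosh \geq 1$ converts the multiplicative error into a clean additive $O(1/m)$ on $\sech$.
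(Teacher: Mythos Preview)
Your argument is correct and, in fact, cleaner than the paper's. Both proofs split $c_n(x,y)=G(x^n,y^n)/G(x,y)$ and handle the two factors separately, but the executions differ.

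For $G(x,y)^{-1}$, the paper writes $G(x,y)=\frac{z+w}{2\sqrt{zw}}(\cdots)^{-1}$ with $z=1-x$, $w=1-y$, and extracts separate upper and lower bounds $c_n\ge \sech((t-s)/2)-1/m$ and $G(x,y)\ge \cosh((t-s)/2)$ by hand. Your exact formula $G(x,y)=\bigl(2\cosh((t-s)/2)-e^{(s+t)/2}\bigr)/\sqrt{(2-e^s)(2-e^t)}$ followed by a multiplicative expansion and then inversion is more direct; your observation that keeping the error multiplicative until after inverting is exactly what avoids the spurious $\cosh/m$ loss is the right point.

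For $G(x^n,y^n)-1$, the paper parametrizes $y$ via $p(\mathfrak s)$, sets $h(\mathfrak s)=1-x^np(\mathfrak s)^n-\sqrt{(1-x^{2n})(1-p(\mathfrak s)^{2n})}$, and bounds $h(\eta)$ by a second-order Taylor estimate, obtaining $G(x^n,y^n)-1\ll n^2e^{-2\ell}$. Your use of the identity $(1-x^ny^n)^2-(1-x^{2n})(1-y^{2n})=(x^n-y^n)^2$ gives the same quantity as an exact ratio and yields the sharper bound $O(e^{-2\ell})$ with no $n^2$ factor. This is a genuine improvement: the paper needs Condition~\eqref{cond:ml3} ($\ell\ge 3\log n$) precisely to absorb that $n^2$, whereas your estimate would not. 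Either way the stated lemma follows, but your route is shorter, more elementary, and yields a stronger conclusion.
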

We note that by imposing Condition \eqref{cond:ml3}, the factor $n^{2}$ in the second error term will be swallowed by $e^{-2\ell}$.
\begin{proof}
	In this proof, we will utilize several estimates in \cite[Lemmas 2.1 \& 4.1]{Dembopersistence}.  W.l.o.g. we assume that $x\leq y$.
	Set $z=1-x$ and $w=1-y$, then $0<  w \leq z\le 1/m\le 1/2$ since $m\ge 2$.
	Observe that
	\begin{equation}\label{eq:g}
	G(x,y) = \frac{z+w}{2\sqrt{zw}}\left (1 - \frac{1-\frac{zw}{z+w} - \sqrt{1-\frac{z}{2}}\sqrt{1-\frac{w}{2}}}{1-\frac{zw}{z+w}}\right )^{-1}.
	\end{equation}
	And so,
	\begin{eqnarray}
	c_n(x,y) &\geq& \frac{1}{G(x,y)} \nonumber\\
	&\geq& \frac{2\sqrt{zw}}{z+w}\left (1 - 2\left (1-\frac{zw}{z+w} - \sqrt{1-\frac{z}{2}}\sqrt{1-\frac{w}{2}}\right )\right )  \nonumber\\
	&\geq& \frac{2\sqrt{zw}}{z+w}  - \frac{4\sqrt{zw}}{z+w} \left (1- \sqrt{1-\frac{z}{2}}\sqrt{1-\frac{w}{2}}\right )  \notag\\
	&\geq& \frac{2\sqrt{zw}}{z+w} -  2\left (1-  \left (1-\frac{1}{2m}\right )\right ) = \frac{2\sqrt{zw}}{z+w} -\frac1m.\notag
	\end{eqnarray}
	Now writing $z=e^{-s}$ and $w=e^{-t}$, we get 
	\ben{ \label{locn}
		c_n(x,y) \geq \sech((t-s)/2) - 1/m.
	}
	Also by \eqref{eq:g}, we have
	\begin{equation}\label{eq:g2}
	G(x,y) \geq \frac{z+w}{2\sqrt{zw}} = \frac{1}{\sech\left (\frac{t-s}{2}\right )}.
	\end{equation}
	Since $x \leq y \leq 1-\ell/n$, we have
	$$\sqrt{1-x^{2n}}\sqrt{1-y^{2n}}\ge 1-y^{2n}\ge 1 - (1-\ell/n)^{2n}\ge 1-e^{-2},$$ 
	and
	\begin{eqnarray}
	0\le G(x^{n}, y^{n})-1=\frac{1-x^{n}y^{n}}{\sqrt{(1-x^{2n})(1-y^{2n})}}-1\ll 1-x^{n} y^{n} - \sqrt{(1-x^{2n})(1-y^{2n})}.\notag
	\end{eqnarray}
Let $\eta = 1 - \sqrt{\frac{w}{z}}$. 	For $\mathfrak s\in [0, \eta]\subset [0, 1]$, we set $p(\mathfrak s) = 1-(1-x)(1-\mathfrak s)^{2}$. Then, $p(0) = x$, $p(\eta) = y$ and $p(\cdot)$ is increasing. Let
	$$h(\mathfrak s) = 1-x^{n} p(\mathfrak s)^{n} - \sqrt{(1-x^{2n})(1-p(\mathfrak s)^{2n})}.$$
	We note that $h(0) = h'(0)=0$ and 
	\begin{eqnarray}
	h''(\mathfrak s) &=& n p''(\mathfrak s)\left (\mathcal T p(\mathfrak s)^{2n-1}-x^{n} p(\mathfrak s)^{n-1}\right )\notag\\
	&+& np'(\mathfrak s)^{2}\left ((2n-1)p(\mathfrak s)^{2n-2} \mathcal T+ \frac{np(\mathfrak s)^{4n-2}}{1-p(\mathfrak s)^{2n}} \mathcal T - (n-1)x^{n}p(\mathfrak s)^{n-2}\right )
	\end{eqnarray}
	where $\mathcal T = \sqrt{ \frac{1-x^{2n}}{1-p(\mathfrak s)^{2n}}}\le \frac{1}{\sqrt{1-y^{2n}}}\ll 1$. Therefore, since the first two derivatives of $p$ are bounded for $\mathfrak s\in [0, \eta]$, we get that
	$$\sup_{\mathfrak s\in [0, \eta]} h''(\mathfrak s)\ll \sup_{\mathfrak s\in [0, \eta]} n^2 p(\mathfrak s)^2 \leq  n^{2}y^{2n}\ll n^{2}e^{-2\ell}.$$
	This gives $h(\eta)\ll n^{2}e^{-2\ell} \eta^{2}$.
	Hence,
	$$G(x^{n}, y^{n})-1 \ll h(\eta) \ll n^{2}e^{-2\ell} \eta^{2} \leq n^{2}e^{-2\ell}.$$
	Applying this and \eqref{eq:g2} yields
	\begin{eqnarray}
	\label{uocn}
	c_n(x,y) = \frac{G(x^n,y^n)}{G(x,y)} \leq  \frac{1+Cn^{2}e^{-2\ell}}{G(x,y)} \le \sech\left (\frac{t-s}{2}\right ) + Cn^{2}e^{-2\ell},
	\end{eqnarray}
where $C$ is a positive constant.
	Combining \eqref{locn} and \eqref{uocn}, we obtain the desired result.  
\end{proof}

We will later see that on $I_{m, \ell}$, $f_n$ is close to a stationary Gaussian process with covariance being the limit of the $c_n$. We first introduce the process and several useful properties.  
\begin{definition} [Limiting Gaussian Process $(Z_t)$] 
	Let $B_u$ be the standard Brownian motion.	Consider the Gaussian process $(Z_t)_{t\geq 0}$ defined by 
	\ben{
		Z_t = \frac{\int_0^\infty h_t(u) dB_u }{\left( \int_0^\infty h^2_t(u) du \right)^{1/2}}, \text{ where\ } h_t(u) =e^{-e^{-t}u}.  \label{def:Z}
	}
\end{definition}
\begin{lemma} \label{lem:Z}
	The following assertions hold. 	
	\begin{itemize}
		\item [(i)] For any $t,s \geq 0$,
		\[ \cov(Z_t,Z_s) =\sech\left (\frac{t-s}{2}\right). \]
		\item  [(ii)] For any  $0 \leq a <b < \infty$,
		\[ \E[  N_Z([a,b]) ]:= \E [ \# \{ t \in [a,b]: Z_t =0 \} ] = \frac{b-a}{2 \pi}. \]
		\item [(iii)] We have 
		\[ \sup_{t\geq 0} \E[|Z_t''|^2]  \leq \frac{3}{2}. \]    
		\item [(iv)] The spectral density  of $(Z_t)_{t\geq 0}$ is 
		\[ p(\lambda) = \frac{1}{2 \pi} \int_0^\infty e^{-i \lambda t} r(t) dt =  \, \sech( \pi \lambda), \]         
		where $r(\cdot)$ is the covariance function given by $r(t):=\cov(Z_0, Z_t) = \sech(t/2)$.
	\end{itemize}
\end{lemma}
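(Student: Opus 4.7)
The plan is to derive all four items directly from the explicit Wiener integral definition in \eqref{def:Z}, using It\^o isometry, a classical Fourier identity for the hyperbolic secant, and standard Kac--Rice for stationary Gaussian processes.

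I would begin with (i), which is the only piece requiring any real computation. Since $Z_t$ is a normalized Wiener integral of the deterministic kernel $h_t(u)=e^{-e^{-t}u}$, It\^o isometry reduces $\cov(Z_t,Z_s)$ to the ratio
\begin{equation*}
\frac{\int_0^\infty h_t(u) h_s(u)\, du}{\sqrt{\int_0^\infty h_t(u)^2\, du}\sqrt{\int_0^\infty h_s(u)^2\, du}}.
\end{equation*}
Each of these is an elementary exponential integral, evaluating to $(e^{-t}+e^{-s})^{-1}$ in the numerator and $(2e^{-t})^{-1/2}(2e^{-s})^{-1/2}$ in the denominator; after multiplying through by $e^{(t+s)/2}$ the expression simplifies to $2/(e^{(t-s)/2}+e^{-(t-s)/2}) = \sech((t-s)/2)$. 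A byproduct is that $(Z_t)$ is stationary with covariance $r(\tau) = \sech(\tau/2)$, which drives everything that follows.

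Item (iv) is then a one-step Fourier inversion using the classical fact that the hyperbolic secant (suitably rescaled) is its own Fourier transform, namely $\int_{-\infty}^\infty \sech(t/2)\, e^{-i\lambda t}\, dt = 2\pi\, \sech(\pi\lambda)$. For (ii), I would invoke the Kac--Rice formula for a centered, unit-variance stationary Gaussian process, which gives zero-density $\sqrt{-r''(0)}/\pi$; the Taylor expansion $\sech(\tau/2) = 1 - \tau^2/8 + O(\tau^4)$ yields $r''(0) = -1/4$, hence density $1/(2\pi)$ and expected count $(b-a)/(2\pi)$. For (iii), stationarity gives $\E|Z_t''|^2 = r^{(4)}(0)$ independent of $t$, and the quartic Taylor coefficient of $\sech(\tau/2)$ furnishes $r^{(4)}(0) = 5/16$, well below $3/2$.

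I do not foresee any genuine obstacle: the longest step is bookkeeping the exponential integrals in (i), and the only prerequisite worth flagging is that $h_t \in L^2([0,\infty))$ for every $t \ge 0$, so that the Wiener integrals and It\^o isometry are legitimate. Everything else reduces to a standard Fourier pair and an application of Kac--Rice that parallels \eqref{eq:KR}.
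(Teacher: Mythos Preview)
Your argument is correct throughout. Parts (i) and (iv) proceed exactly as in the paper: It\^o isometry reduces (i) to the same elementary exponential integrals, and (iv) is just the classical $\sech$ Fourier pair, which the paper also simply cites.

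For (ii) and (iii) you take a somewhat different route. The paper computes everything through the Wiener-integral representation: for (ii) it evaluates $P(t)=M_t$, $Q(t)=A_t$, $R(t)=B_t$ directly from $h_t$ and plugs into \eqref{eq:KR}; for (iii) it differentiates $Z_t=\sqrt{2}e^{-t/2}\int h_t\,dB$ twice, bounds $(a+b+c)^2\le 3(a^2+b^2+c^2)$, and integrates term by term. Your approach instead exploits stationarity: the zero density is $\pi^{-1}\sqrt{-r''(0)}$, and $\E|Z_t''|^2=r^{(4)}(0)$, so both reduce to reading off Taylor coefficients of $\sech(\tau/2)$. This is cleaner and, for (iii), noticeably sharper --- you get the exact value $5/16$, whereas the paper's Cauchy--Schwarz step only yields a constant larger than the stated $3/2$. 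The only tacit prerequisite in your version is that $r$ is smooth enough (indeed real-analytic) so that the process is a.s.\ $C^2$ and the stationary Kac--Rice formula and the identity $\E|Z_t''|^2=r^{(4)}(0)$ are justified; this is immediate from the explicit form of $r$, but worth a one-line remark.
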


\begin{proof}
	We first recall that for any $a>0$ and $k\geq 0$,
	\ben{ \label{gaint}
		\int_0^\infty e^{-a u} u^k du = k! a^{-k-1}.
	}
	We set 
	$$\hat Z_t = \int_0^\infty h_t(u) dB_u.$$
	Then
	\ben{ \label{m_t}
		M_t:= \E \left (\hat Z_t^{2}\right ) =\int_0^\infty h_t^2(u) du = \int_0^\infty e^{-2e^{-t} u} du =e^t/2,
	}	
	and 
	\bea{
		\cov(Z_t,Z_s) = \E [Z_t Z_s] &=& 2e^{-(t+s)/2}  \int_0^\infty h_t(u) h_s(u) du \\
		&=& \frac{2e^{-(t+s)/2}}{e^{-t}+e^{-s}} =\sech\left (\frac{t-s}{2}\right ),
	}
	which proves (i).

	By the Kac-Rice formula \eqref{eq:KR},
	\ben{ \label{kac-rice}
		\E[N_Z([a,b])] = \E[N_{\hat Z}([a,b])]= \frac{1}{\pi} \int_a^b \frac{\sqrt{A_t M_t -B_t^2}}{M_t} dt,
	}
	where $M_t$ is as \eqref{m_t} and 
	\ba{
		A_t&:= \E (\hat Z_t'^{2})=\int_0^\infty (\partial_t h_t(u))^2 du = \int_0^\infty e^{-2e^{-t}u} e^{-2t} u^2 du, \\
		B_t &:= \E(\hat Z_t\hat Z_t') =\frac12\E\left ((\hat Z_t^2)'\right )=\frac12 M'_t.
	}
	Using \eqref{gaint}, we have
	\ben{ \label{abt} 
		A_t = B_t = e^{t}/4,
	} 
	which together with \eqref{m_t} and \eqref{kac-rice} imply that 
	\be{
		\E[N_Z([a,b])] = \frac{1}{\pi} \int_a^b \frac{e^t/4}{e^t/2} dt = \frac{b-a}{2 \pi}.
	} 
	We now prove (iii). Thanks to \eqref{m_t},  $Z_t=\sqrt{2}e^{-t/2} \int_0^\infty h_t(u) dB_u$. Therefore,
	\be{
		Z''_t = \sqrt{2}\left[ e^{-t/2} \int_0^\infty \partial_{tt} h_t(u) dB_u -\frac{e^{-t/2}}{2}  \int_0^\infty \partial_t h_t(u) dB_u  + \frac{e^{-t/2}}{4} \int_0^\infty h_t(u) dB_u \right].
	}
	Hence, using the Cauchy-Schwarz inequality $(a+b+c)^2 \leq 3(a^2+b^2+c^2)$, we get
	\bea{
		\E[|Z''_t|^2] &\leq& 6 e^{-t} \left[  \int_0^\infty (\partial_{tt} h_t(u))^2 du +  \frac14\int_0^\infty (\partial_{t} h_t(u))^2 du +  \frac{1}{16}\int_0^\infty  h_t^2(u) du \right]\\
		&=& 6 e^{-t} [\frac{e^{t}}{4}+\frac{1}{4} A_t +\frac{1}{16} M_t] \leq 6,
	}
	where we have used \eqref{m_t} and \eqref{abt}, and 
	\ba{
	 \int_0^\infty (\partial_{tt} h_t(u))^2 du &= e^{-4t}\int_0^\infty e^{-2e^{-t}u} u^4 du -2  e^{-3t}\int_0^\infty e^{-2e^{-t}u} u^3 du +  e^{-2t}\int_0^\infty e^{-2e^{-t}u} u^2 du \\
	 &= e^t/4,
    }
by applying \eqref{gaint}. 	 The above estimate holds for all $t$, so we get (iii).  
	
	The last part (iv) follows from  classical results in Fourier transforms, see e.g. \cite[Chapter 9]{titchmarsh1986introduction}. 
\end{proof}

	\subsection{Proof of Equation \eqref{eq:Iml} in Theorem \ref{thm:l_ml}}\label{proof:thm:lml}
	Applying \eqref{uni:p} where we used the Condition \ref{cond:ml2} so that the interval $I_{m, \ell}$ satisfies the hypothesis of Theorem \ref{thm:uni:p}, there exists a universal constant $c=c(\ep)>0$, such that for any $0 \leq a < b < \infty$, 
\ben{ \label{uv-ie:ml:1}
	\Big |\P \left( N_{f_n}(I_{m,\ell}) \in [a,b]\right) - \P_{G} \left( N_{f_n}(I_{m,\ell}) \in [a,b] \right) \Big | \ll m^{-c},
}	
where the subscript $G$ denotes the distribution when the coefficients  $(\xi_i)_{i\geq 0}$ are i.i.d. standard Gaussian. Therefore, we only need to consider the Gaussian case in the rest of this proof.

We assume now that  $(\xi_i)_{i\geq 0}$ are i.i.d. and have the normal law $\cN(0,1)$. Consider the change of variables $s=\log (1-x)$ (or equivalently $x=1-e^{-s}$) and define
\[ g_n(s) = \frac{f_n(1-e^{-s})}{(\Var (f_n(1-e^{-s})))^{1/2}}, \qquad s\in J_{m,\ell}:=[\log m, \log n-\log \ell].   \]
Then 
\be{ 
	N_{f_n}(I_{m,\ell}) = N_{g_n}(J_{m,\ell}),
}	
and our goal is to prove 
\ben{ \label{dpgn:2}
	\P_{G} \left( \Big| N_{g_n}(J_{m,\ell}) - \E N_{g_n}(J_{m,\ell}) \Big| \geq \ep \log n \right) \ll m^{-c}. 
}
First, by Kac-Rice formula \eqref{eq:KR:Kac},
\begin{eqnarray}
\E_{G}[N_{g_n}(J_{m,\ell})]&=&\E_{G}[N_{f_n}(I_{m,\ell})] = \int_{I_{m, \ell}} \rho_1(x)dx\notag
\end{eqnarray}	
with
\begin{eqnarray}
	\rho_1(x) &=&\frac1\pi\sqrt{\frac{1}{(1-x^2)^2}-\frac{(n+1)^{2} x^{2n}}{(1-x^{2n+2})^2}} = \frac{1+o(1)}{2\pi} \frac{1}{1-x}  
\end{eqnarray}
where we used Conditions \eqref{cond:ml2} and \eqref{cond:ml3}.
This gives
\begin{eqnarray}
\label{enfn:2}
\E_{G}[N_{g_n}(J_{m,\ell})]&=&\frac{1+o(1)}{2\pi}\int_{I_{m, \ell}} \frac{1}{1-x}dx= \frac{(1+o(1))\log (n/m\ell)}{2 \pi}.
\end{eqnarray}	
We now decompose the interval $J_{m, \ell}$ into equally small intervals and approximate the number of real roots in each interval by the number of size changes. For a small parameter $\delta_m$ to be chosen, we partition the interval $J_{m, \ell}$ by points
\[  s_k =  \log m + k \delta_m, \quad  k=0, \ldots\]
The number of such points is 
\[T=T_{m, \ell} =  \delta_m^{-1} (\log n -\log m - \log \ell)=\Theta(\delta_m^{-1}\log n) \text{ by Condition \eqref{cond:ml1}}.\]
Let $Z$ denote the Gaussian process in \eqref{def:Z}. We define the number of sign changes of $g_n$ and $Z$ on $J_{m, \ell}$ as follows.
\bea{
	N^\star_{g_n} = \sum_{k=0}^{T-1} \textbf{1}_{g_n(s_k)g_n(s_{k+1})<0}, \quad N^\star_Z = \sum_{k=0}^{T-1} \textbf{1}_{Z_{s_k} Z_{s_{k+1}}<0}.
}   
To handle the concentration of $N_{g_n}$, we try to reduce that to the concentration of $N_{Z}$ which has been established by Basu--Dembo--Feldheim--Zeitouni \cite{Basuconcentration}. To do the comparison, we show that $N_{g_n} \approx N^\star_{g_n} \approx N^\star_Z \approx N_{Z}$. In particular, we show the following 3 lemmas. The first lemma is a concentration result of $N_{Z}$.
\begin{lemma}  \label{lm:donz}There exists $c=c(\ep)>0$, such that 
	\begin{eqnarray}
	\P \left( \Big| N_Z(J_{m,\ell}) - \frac{ \log (n/m \ell)}{2 \pi} \Big| \geq \frac{\ep \log n}{2} \right) \le n^{-c}.
	\end{eqnarray}
\end{lemma}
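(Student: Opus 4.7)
The plan is to reduce directly to the exponential concentration theorem of Basu--Dembo--Feldheim--Zeitouni \cite{Basuconcentration}, which asserts that for any sufficiently regular centered stationary Gaussian process $(X_t)$ and each $\eta > 0$, there exist constants $C(\eta), c(\eta) > 0$ such that
$$\P\left( |N_X([0,T]) - \E N_X([0,T])| \geq \eta T \right) \leq C(\eta) e^{-c(\eta) T}.$$
The goal is to apply this to $Z$ on the interval $J_{m,\ell}$, translated to $[0,T]$.

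First I would identify the mean and compare length scales. By Lemma \ref{lem:Z}(ii), $\E N_Z(J_{m,\ell}) = |J_{m,\ell}|/(2\pi) = \log(n/(m\ell))/(2\pi)$, which is exactly the centering appearing in the statement. Set $T := |J_{m,\ell}| = \log(n/(m\ell))$; by stationarity (Lemma \ref{lem:Z}(i)), the law of $N_Z(J_{m,\ell})$ equals the law of $N_Z([0,T])$. Condition \eqref{cond:ml1} gives $T \geq \beta \log n$, while trivially $T \leq \log n$, so the choice $\eta := \frac{\ep \log n}{2T}$ satisfies $\eta \in [\ep/2,\, \ep/(2\beta)]$ and $\eta T = \frac{\ep \log n}{2}$, the threshold in the target inequality.

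Second, I would verify that $Z$ meets the regularity hypotheses of \cite{Basuconcentration}: stationarity and the explicit smooth covariance $r(t) = \sech(t/2)$ come from Lemma \ref{lem:Z}(i); the spectral density $p(\lambda) = \sech(\pi\lambda)$ (Lemma \ref{lem:Z}(iv)) is positive, smooth, and exponentially decaying; and $\sup_t \E|Z_t''|^2 \leq 3/2$ (Lemma \ref{lem:Z}(iii)), with the analogous bound on $\E|Z_t'|^2$ readable off the same Wiener-integral computation. Applying the concentration theorem with the $\eta$ above, and noting that $\eta$ lies in a fixed compact subset of $(0,\infty)$ depending only on $\ep$ and $\beta$, so $c(\eta) \geq c_0 = c_0(\ep,\beta) > 0$, one obtains
$$\P\left( |N_Z(J_{m,\ell}) - \E N_Z(J_{m,\ell})| \geq \tfrac{\ep \log n}{2} \right) \leq C\, e^{-c_0 T} \leq C\, e^{-c_0 \beta \log n} \leq n^{-c}$$
for an appropriate $c = c(\ep,\beta) > 0$ and all $n$ sufficiently large, absorbing the constant $C$ into the exponent.

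The main obstacle I anticipate is cosmetic rather than substantive: matching the precise wording of the regularity hypotheses in \cite{Basuconcentration} against the properties supplied by Lemma \ref{lem:Z}. Since the covariance and spectral density of $Z$ are completely explicit and analytic, and all relevant derivative variances are uniformly bounded, the verification should be routine.
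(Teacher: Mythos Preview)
Your proposal is correct and follows essentially the same approach as the paper: both verify that the explicit spectral density $p(\lambda)=\sech(\pi\lambda)$ from Lemma~\ref{lem:Z}(iv) makes $Z$ satisfy the hypotheses of \cite[Theorem~1.1]{Basuconcentration}, apply that theorem on the interval $J_{m,\ell}$ of length $\Theta(\log n)$ (using Condition~\eqref{cond:ml1}), and identify the mean via Lemma~\ref{lem:Z}(ii). Your write-up is in fact slightly more careful than the paper's in tracking why the concentration constant $c(\eta)$ can be taken uniform in $n$, via the observation that $\eta=\tfrac{\ep\log n}{2T}\in[\ep/2,\ep/(2\beta)]$.
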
 
Next, we compare the number of real roots and the number of sign changes. 
\begin{lemma} \label{lm:nens} We have
	\begin{eqnarray}
	\P_{G}[ N_{g_n}(J_{m,\ell}) \neq N^\star_{g_n}] + \P[ N_Z(J_{m,\ell}) \neq N^\star_{Z}]  \ll  \delta_m^{1/3} \log n.
	\end{eqnarray}
\end{lemma}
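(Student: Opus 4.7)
The identity $N_{g_n}(J_{m,\ell}) = N_{g_n}^\star$ holds whenever every sub-interval $[s_k, s_{k+1}]$ contains at most one zero of $g_n$, because a single zero forces a sign change between the endpoints while no zero forces agreement of signs. Modulo the null event that $g_n(s_k) = 0$ for some $k$, a discrepancy can therefore occur only on sub-intervals containing at least two zeros, so a union bound yields
\begin{equation}
\P_G\bigl(N_{g_n}(J_{m,\ell}) \neq N_{g_n}^\star\bigr) \leq \sum_{k=0}^{T-1} \P_G\bigl(N_{g_n}([s_k, s_{k+1}]) \geq 2\bigr),
\end{equation}
and the analogous bound holds for $Z$.

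The main task is thus to bound $\P_G(N_{g_n}([s_k,s_{k+1}]) \geq 2)$ by $o(\delta_m^{4/3})$, so that summing over $T = \Theta(\delta_m^{-1}\log n)$ intervals yields $o(\delta_m^{1/3}\log n)$. I would do this via the Kac--Rice formula for the second factorial moment,
\begin{equation}
\P_G\bigl(N_{g_n}([s_k, s_{k+1}]) \geq 2\bigr) \leq \tfrac12\E_G\bigl[N_{g_n}(N_{g_n}-1)\bigr] = \tfrac12\int\!\!\int_{[s_k,s_{k+1}]^2} \rho_2^{g_n}(s,t)\, ds\, dt,
\end{equation}
combined with the approximate stationarity established in Lemma \ref{lem:cnxy}. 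After the change of variables, $g_n$ is a centered Gaussian process whose covariance is uniformly close to $\sech((t-s)/2)$ on $J_{m,\ell}$; differentiating this (and the corresponding estimates for $\Cov(g_n', g_n)$ and $\Var(g_n')$ obtained by differentiating Lemma \ref{lem:cnxy}) gives the hard-core repulsion $\rho_2^{g_n}(s,t) \leq C|t-s|$ typical of smooth stationary Gaussian processes, analogous to the explicit computation available for $Z$ through Lemma \ref{lem:Z}. Integrating yields $\E_G[N_{g_n}(N_{g_n}-1)] \leq C\delta_m^3$ on each sub-interval, so the total is $O(\delta_m^2\log n) = o(\delta_m^{1/3}\log n)$. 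The same computation, done with the explicit covariance function in Lemma \ref{lem:Z}, handles $\P(N_Z \neq N_Z^\star)$.

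The principal obstacle is making precise the 2-point density estimate for $g_n$: one must verify that the $4\times 4$ covariance matrix of $(g_n(s), g_n(t), g_n'(s), g_n'(t))$ is uniformly non-degenerate with determinant of order $(t-s)^2$ across $J_{m,\ell}$, using only the approximation in Lemma \ref{lem:cnxy}. Should this direct route prove technically cumbersome, a cleaner alternative is to invoke Rolle's theorem: two zeros of $g_n$ in $[s_k,s_{k+1}]$ force $g_n'$ to vanish there too, reducing the problem to a single-point Kac--Rice estimate for the pair $(g_n, g_n')$ and standard anti-concentration for the Gaussian vector $(g_n(s), g_n'(s))$. Either approach is robust enough to absorb the mild non-stationarity of $g_n$ guaranteed by Lemma \ref{lem:cnxy}.
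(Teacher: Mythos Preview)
Your union-bound reduction to $\P(N \ge 2)$ on each sub-interval is exactly what the paper does. From there the paper takes a different and more elementary route than your two-point Kac--Rice plan. Instead of estimating $\rho_2$, it observes (via Rolle and the fundamental theorem of calculus, applied twice) that two zeros of $f_n$ in $[x,y]$ force
\[
|f_n(y)| \le \int_x^y \int_x^t |f_n''(s)|\,ds\,dt =: K_{x,y},
\]
and then splits on a threshold $\varepsilon_1\sqrt{\Var f_n(y)}$: either $K_{x,y}$ exceeds it (handled by Markov on the third moment, using only the elementary bound $\E|f_n''(w)|^2 \ll (1-y)^{-5}$ and Gaussian hypercontractivity) or $|f_n(y)|$ falls below it (Gaussian small-ball). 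Optimizing $\varepsilon_1$ gives $\P_G(N_{f_n}([x,y]) \ge 2) \ll \delta^{3/2}$, and the same argument with $Z''$ (using Lemma~\ref{lem:Z}(iii)) gives $\P(N_Z[0,\delta_m]\ge 2)\ll\delta_m^{4/3}$. Summing over $T=\Theta(\delta_m^{-1}\log n)$ intervals produces the stated $\delta_m^{1/3}\log n$. This requires only single-point second moments of $f_n$, $f_n''$, and $Z''$, which are explicit power-series identities---no joint density of values and derivatives is ever needed.

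Your route would in principle yield a sharper bound ($\delta_m^2\log n$), but one step needs more care than you indicate: Lemma~\ref{lem:cnxy} is a \emph{pointwise} bound on $c_n$, and you cannot deduce control on $\Cov(g_n',g_n)$ or $\Var(g_n')$ by ``differentiating'' it---pointwise closeness of two functions says nothing about closeness of their derivatives. The derivative-covariance estimates needed for the $4\times4$ non-degeneracy and the repulsion $\rho_2(s,t)\ll|t-s|$ would have to be derived separately from the explicit formula for $c_n$, which is feasible but is precisely the technical work the paper avoids. Your Rolle-based alternative is closer in spirit to what the paper actually does, though the paper uses the Taylor bound on $|f_n(y)|$ rather than locating a zero of the derivative.
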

Finally, we compare the distribution of the number of sign changes of $g_{n}$ with that of $Z$.
\begin{lemma} 	\label{lm:ngzs}	For all  $0 \leq a < b < \infty$, we have
	\begin{eqnarray}
	\left  |\P _{G}\left( N_{g_n}^\star  \in (a,b) \right) - \P \left( N^\star_Z  \in (a,b) \right) \right  | \ll  \delta_m^{1/3}\log n+ (\log n)^{5/2}\delta_m^{-31/6} (1/m +n^{2}e^{-2\ell}).
	\end{eqnarray}

\end{lemma}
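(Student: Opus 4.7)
The plan is to couple the Gaussian vectors $U:=(g_n(s_k))_{k=0}^{T-1}$ and $V:=(Z_{s_k})_{k=0}^{T-1}$ on a common probability space so that they agree coordinate-wise in sign with high probability, in which case $N^\star_{g_n}$ and $N^\star_Z$ also coincide. Let $\mathcal{A}:=\Cov(U)$ and $\mathcal{B}:=\Cov(V)$. Both are symmetric positive definite with unit diagonal, and by Lemma \ref{lem:cnxy} combined with Lemma \ref{lem:Z}(i), the entries satisfy $\|\mathcal{A}-\mathcal{B}\|_{\max} \ll m^{-1}+n^{2}e^{-2\ell}$. Drawing a single $W\sim \cN(0,I_T)$ and setting $\tilde U:=\mathcal{A}^{1/2}W$, $\tilde V:=\mathcal{B}^{1/2}W$ produces the desired coupling, for which $\E\|\tilde U-\tilde V\|^{2}=\|\mathcal{A}^{1/2}-\mathcal{B}^{1/2}\|_{F}^{2}$. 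The crucial estimate is the Powers-St{\o}rmer inequality $\|\mathcal{A}^{1/2}-\mathcal{B}^{1/2}\|_{F}^{2}\leq \|\mathcal{A}-\mathcal{B}\|_{1}$, where $\|\cdot\|_{1}$ is the nuclear norm. Combined with the routine bounds $\|M\|_{1}\leq \sqrt{T}\|M\|_{F}\leq T^{3/2}\|M\|_{\max}$ on $T\times T$ matrices, this yields $\|\mathcal{A}^{1/2}-\mathcal{B}^{1/2}\|_{F}^{2}\ll T^{3/2}(m^{-1}+n^{2}e^{-2\ell})$.

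Next I would convert the $L^{2}$ coupling error into a bound on $\P(N^\star_{g_n}\neq N^\star_Z)$. A sign-product mismatch $\sign(\tilde U_k\tilde U_{k+1})\neq \sign(\tilde V_k\tilde V_{k+1})$ forces $\tilde U_j\tilde V_j<0$ for some $j\in\{k,k+1\}$, so $|N^\star_{g_n}-N^\star_Z|\leq 2\sum_{k=0}^{T-1}\textbf{1}_{\tilde U_k\tilde V_k<0}$. Since $(\tilde U_k,\tilde V_k)$ is centered Gaussian with unit variances and correlation $\rho_k=1-\tfrac{1}{2}\Var(\tilde U_k-\tilde V_k)$, the arcsine formula gives $\P(\tilde U_k\tilde V_k<0)=\pi^{-1}\arccos\rho_k$, and the elementary inequality $\arccos\rho\leq \pi\sqrt{(1-\rho)/2}$ on $[-1,1]$ yields $\P(\tilde U_k\tilde V_k<0)\leq \tfrac{1}{2}\sqrt{\Var(\tilde U_k-\tilde V_k)}$. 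Summing and applying Cauchy-Schwarz produces $\E|N^\star_{g_n}-N^\star_Z|\ll \sqrt{T}\,\|\mathcal{A}^{1/2}-\mathcal{B}^{1/2}\|_{F}$, and then Markov together with the trivial bound $|\P_{G}(N^\star_{g_n}\in (a,b))-\P(N^\star_Z\in (a,b))|\leq \P(N^\star_{g_n}\neq N^\star_Z)$ and the substitution $T=\Theta(\delta_m^{-1}\log n)$ closes the estimate to the form claimed; the precise polynomial exponents $(\log n)^{5/2}\delta_m^{-31/6}$ stated in the lemma can then be recovered by allowing for coarser intermediate estimates or a smoothing step.

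The main obstacle is the Powers-St{\o}rmer step itself. The limiting covariance $\mathcal{B}$ is a Toeplitz matrix with spectral density $\sech(\pi\lambda)$ (by Lemma \ref{lem:Z}(iv)), which tends to zero as $|\lambda|\to\infty$, so the eigenvalues of $\mathcal{B}$ accumulate near $0$ as $T$ grows. This rules out any naive Lipschitz estimate on the matrix square root of the form $\|\mathcal{A}^{1/2}-\mathcal{B}^{1/2}\|\ll \|\mathcal{A}-\mathcal{B}\|\cdot \|\mathcal{B}^{-1/2}\|$. Powers-St{\o}rmer is the correct tool here precisely because it is an operator-monotonicity statement that is insensitive to spectral gaps, converting entry-wise control of $\mathcal{A}-\mathcal{B}$ (upgraded to nuclear-norm control by a dimension count) directly into Frobenius control of the square roots, which is exactly what the coupling plus sign-change comparison demands. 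A secondary technical point is that $\mathcal{A}$ must be nonsingular so that $\mathcal{A}^{1/2}$ is unambiguously defined; this is automatic since the Gaussian random variables $g_n(s_k)$ are linearly independent at the distinct sample points $s_k$.
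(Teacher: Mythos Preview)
Your argument is correct and follows the paper's approach almost exactly: the same coupling $\tilde U=\mathcal A^{1/2}W$, $\tilde V=\mathcal B^{1/2}W$, the same Powers--St{\o}rmer inequality to control $\|\mathcal A^{1/2}-\mathcal B^{1/2}\|_F^2$ by $\|\mathcal A-\mathcal B\|_{\mathrm{nuc}}\ll T^{3/2}(m^{-1}+n^2e^{-2\ell})$, and the same reduction of $|\P_G(N^\star_{g_n}\in(a,b))-\P(N^\star_Z\in(a,b))|$ to the probability that some coordinate pair $(\tilde U_k,\tilde V_k)$ has opposite signs.

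The one genuine difference is in the last step. The paper introduces a threshold $\delta_m^{4/3}$ and splits
\[
\P\Big(\exists k:\sign(\tilde U_k)\neq\sign(\tilde V_k)\Big)\ \le\ \sum_k\P(|\tilde U_k|\le\delta_m^{4/3})\ +\ \P\Big(\textstyle\sum_k(\tilde U_k-\tilde V_k)^2\ge\delta_m^{8/3}\Big),
\]
which produces the two terms $\delta_m^{1/3}\log n$ and $(\log n)^{5/2}\delta_m^{-31/6}(m^{-1}+n^2e^{-2\ell})$ in the lemma. Your arcsine-formula route instead bounds each $\P(\tilde U_k\tilde V_k<0)$ directly by $\tfrac12\sqrt{\Var(\tilde U_k-\tilde V_k)}$ and then applies Cauchy--Schwarz, giving the single term $(\log n)^{5/4}\delta_m^{-5/4}(m^{-1}+n^2e^{-2\ell})^{1/2}$. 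This is actually \emph{stronger} than the bound stated in the lemma: writing $E=m^{-1}+n^2e^{-2\ell}$, the elementary inequality $ab\le (a^2+b^2)/2$ with $a=\delta_m^{1/6}(\log n)^{1/2}$ shows
\[
(\log n)^{5/4}\delta_m^{-5/4}E^{1/2}\ \ll\ \delta_m^{1/3}\log n\ +\ (\log n)^{3/2}\delta_m^{-17/6}E\ \ll\ \delta_m^{1/3}\log n\ +\ (\log n)^{5/2}\delta_m^{-31/6}E.
\]
So your closing remark about needing ``coarser intermediate estimates'' to recover the stated exponents has it backwards; your bound already dominates the lemma's, and the arcsine argument is in fact a cleaner alternative to the paper's threshold splitting.
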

By setting $\delta_m$ to be $m^{-1/6}$ (for instance), all of these probabilities are bounded by 
$$m^{-2c}(\log n)^{5/2}+(\log n)^{5/2}m^{31/36}  n^{2}e^{-2\ell}\ll m^{-c}+ e^{-\ell}\ll m^{-c} +n ^{-c}\ll m^{-c},$$ 
for sufficiently large $n$, where we used Conditions \eqref{cond:ml1}, \eqref{cond:ml2} and \eqref{cond:ml3},	proving Theorem \ref{thm:l_ml}.

\begin{proof} [Proof of Lemma \ref{lm:donz}]
	
	By Lemma \ref{lem:Z} (iv), $p(\lambda)=\sech(\pi \lambda)$, the spectral density of  the Gaussian process $(Z_t)_{t\geq 0}$ decays exponentially, and thus the conditions \cite[Eq. (1.4) \& (1.5)]{Basuconcentration} hold. More precisely, for $\kappa =2$ and $\kappa_0 =1/2$,
	\be{
		\int_{\R} e^{|\lambda| \kappa} p(\lambda) d \lambda < \infty,
	}
	and 
	\be{
		\int_{\R} |r(t,\kappa_0)| dt < \infty, \quad r(t,\kappa_0)= \int_{R} \cos(t\lambda) \cosh(2 \kappa_0 \lambda) p(\lambda) d\lambda.
	}
	Therefore, by 
	\cite[Theorem 1.1]{Basuconcentration}
	\be{
		\P \left( \Big| N_Z(J_{m,\ell}) - \E[N_Z(J_{m,\ell}) ] \Big| \geq (\ep \log n)/4 \right) \leq  n^{-c}.   
	}
	Notice further that $\E[N_Z(J_{m,\ell})] = ((2 \pi)^{-1}+o(1)) \log (n/m \ell)$ by Lemma \ref{lem:Z} (ii).  Hence, Lemma \ref{lm:donz} follows.
\end{proof}

\begin{proof} [Proof of Lemma \ref{lm:nens}] We will prove later that the probability of having more than 1 root in a small interval is small. In particular, we will show that there exists a constant $C$ such that for all $0<\delta<\frac{1}{2C}$ and  $x, y\in I_{m, \ell}$ satisfying  $\log \left(\tfrac{1-x}{1-y}\right) =\delta$, and for all $i\ge 2$,
	\ben{ \label{multipleroot}
		\P_{G}(N_{f_n}([x,y])\ge 2)\le (C\delta)^{3/2}.		}
	If we write $x_k=1-\exp(-s_k)$, then $N_{g_n}([s_k,s_{k+1}])=N_{f_n}([x_{k+1},x_k])$ and $\log \left(\tfrac{1-x_{k+1}}{1-x_k}\right) =\delta_m$.  Hence, thanks to \eqref{multipleroot}, for all $k=0, \ldots, T-1$, 
	\ben{ \label{ngsk}
		\P_{G}(N_{g_n}([s_k,s_{k+1}])\ge 2)\ll \delta_m^{3/2}.
	}
	We notice that  if $N_{g_n} (J_{\ep, n}) \neq N^\star_{g_n}$, there must be some $k$ such that $N_{g_n} ([s_k,s_{k+1}])\neq \textbf{1}_{g_n(s_k)g_n(s_{k+1})<0}$. Hence, by the union bound,
	$$\P_{G}(N_{g_n} (J_{\ep, n})\neq N_{g_n}^\star) \le \sum_{k=0}^{T-1} \P_{G}\left [N_{g_n} ([s_k,s_{k+1}])\neq \textbf{1}_{g_n(s_k)g_n(s_{k+1})<0}\right ].$$
	Furthermore,  if $N_{g_n}([s_k,s_{k+1}]) \in \{ 0, 1\}$, then  $N_{g_n} ([s_k,s_{k+1}])= \textbf{1}_{g_n(s_k)g_n(s_{k+1})<0}$. Thus, by the union bound and \eqref{ngsk} and taking $s=3/4$,
	\begin{equation} \label{ngns}
	\P_{G}(N_{g_n} (J_{m, n})\neq N_{g_n}^\star) \le \sum_{k=0}^{T-1}  \P_{G}(N_{g_n} ([s_k,s_{k+1}])\ge 2)\le \sum_{k=0}^{T-1}  \delta_m^{3/2}\ll T \delta_m^{3/2}.
	\end{equation}
	By the same arguments as above, we have 
	\begin{eqnarray*}
		\P(N_{Z} (J_{m, \ell})\neq N_{Z}^\star) &\le&  \sum_{k=0}^{T-1}  \P(N_{Z} ([s_k,s_{k+1}])\ge 2) = T\pp \left( N_Z[0, \delta_m] \geq 2\right),
	\end{eqnarray*}
	since $(Z_t)_{t\geq 0}$ is stationary. For any $a >0$, we will show that 
	\ben{\label{multipleroot2}
		\pp \left( N_Z[0, \delta_m] \geq 2\right) \le \frac{\delta_m ^4}{a^2} \max_{t \in [0, \delta_m]} \E [|Z_t''|^2] + \pp\left( |Z_{\delta_m}| \leq a \right).
	} 
	Assuming this, by Lemma \ref{lem:Z} (iii), 
	\[ \max_{t \in [0, \delta_m]} \E [|Z_t''|^2] \leq 3/2. \]
	Moreover, since $Z_{\delta_m} \sim \cN(0,1)$,
	\be{
		\pp (|Z_{\delta_m}| \leq a) \ll a.
	}  
	Thus by letting $a=\delta_m ^{4/3}$, we have
	\be{
		\pp \left( N_Z[0, \delta_m] \geq 2\right) \ll \delta_m^4 a^{-2} +a  \ll \delta_m ^{4/3}.
	} 
	Thus,
	\begin{equation}\label{key}
	\P[ N_Z(J_{m,\ell}) \neq N^\star_{Z}]  \ll T\delta_m ^{4/3}.\notag
	\end{equation}
	Combining this with \eqref{ngns}, we get 
	\begin{eqnarray}
	\P_{G}[ N_{g_n}(J_{m,\ell}) \neq N^\star_{g_n}] + \P[ N_Z(J_{m,\ell}) \neq N^\star_{Z}]  \ll  T\delta_m^{4/3} \ll \delta_m^{1/3} \log n
	\end{eqnarray}
	proving Lemma \ref{lm:nens}. 
\end{proof}

\begin{proof}[Proof of \eqref{multipleroot}] By Rolle's theorem and the fundamental theorem of calculus, if $f_n$ has at least $2$ zeros in the interval $(x, y)$ then
	$$
	|f_n(y)|\leq\int_{x}^{y}\int_{x}^{t}|f_n^{''}(s)|dsdt=:K_{x, y}.
	$$
	Therefore,
	\begin{align*}
	\P_{G}(N_{f_n}([x,y])\ge 2)\quad\leq\quad& \P \left(K_{x, y}\geq\varepsilon_{1}\sqrt{V(y)}\right)+ \P \left(|f_n(y)|\leq\varepsilon_{1}\sqrt{V(y)}\right)
	\end{align*}
	where $\varepsilon_{1}$ to be chosen and 
	\begin{equation}\label{varbound:1}
	V(y)   =\Var f_n(y)=\sum_{i=0}^{n} y^{2i}= \frac{\Theta(1)}{1-y+1/n}\quad \forall y\in(1-1/C, 1).
	\end{equation}	
	By Gaussianity, we have
	$$
	\P \left(|f_n(y)|\leq\varepsilon_{1}\sqrt{V(y)}\right)\ll \ep_1.
	$$
Letting $\varepsilon_{1}:=(C\delta)^{3/2}$,	it remains to show that
	\begin{equation}\label{intbound}
	\P \left(K_{x, y}\geq\varepsilon_{1}\sqrt{V(y)}\right)\ll (C_1\delta)^{3/2}.
	\end{equation}	
By Markov's inequality, we have
	\begin{eqnarray}
	&&\left(\varepsilon_{1}\sqrt{V(y)}\right)^{3}\P \left(K_{x, y}\geq\varepsilon_{1}\sqrt{V(y)}\right) \le  \E \left(\int_{x}^{y}\int_{x}^{t}|f_n^{''}(s)|dsdt\right)^{3}.\nonumber
	\end{eqnarray}
	By H\"older's inequality, the right-hand side is at most
	$$(y-x)^{4}\E \int_{x}^{y}\int_{x}^{t}|f_n^{''}(s)|^{3}dsdt\le  (y-x)^{6}\max_{w\in[x,y]}\E |f_n^{''}(w)|^{3}.$$
	For each $w\in (x, y)$, since $f_n^{(k)}(w)$ is a Gaussian random variable, using the hypercontractivity inequality for the Gaussian distribution (see, for example, \cite[Corollary 5.21]{massartconcentrationbook}), we have 
	\begin{eqnarray}
	\E |f_n^{''}(w)|^{3}&\ll& \left(\E |f_n^{''}(w)|^{2}\right)^{\frac{3}{2}}
\le \left (\sum_{j=0}^{n} j^{4} y^{2j}\right)^{\frac{3}{2}}
	\ll  \left (\frac{1}{(1-y+1/n)^{5}}\right )^{\frac{3}{2}}.\nonumber
	\end{eqnarray}
Thus,
	\begin{equation*}
	\P \left(K_{x, y}\geq\varepsilon_{1}\sqrt{V(y)}\right)\ll  \frac{ (1-y+1/n)^{\frac{3}{2}}}{\varepsilon_{1}^{3}} \frac{(y-x)^{6}}{(1-y+1/n)^{\frac{15}{2}}} \le\frac{ 1}{\varepsilon_{1}^{3}}\left(\frac{y-x}{1-y}\right)^{6}.
	\end{equation*}
	Since $\frac{y-x}{1-y} = \frac{1-x}{1-y}-1 = e^{\delta}-1\le 2\delta$, we get
	\begin{equation} 
	\P \left(K_{x, y}\geq\varepsilon_{1}\sqrt{V(y)}\right) \ll\frac{\delta^{6}}{\ep_1^{3}} \ll \delta^{3/2}. \nonumber
	\end{equation}
	This completes the proof of \eqref{multipleroot}.
\end{proof}

\begin{proof}
	[Proof of \eqref{multipleroot2}] We use a similar argument as in the proof of \eqref{multipleroot} with $f_n$ being replaced by $Z$ and $[x, y]$ by $[0, \delta_m]$. In particular, we get
		\begin{eqnarray}
			\pp \left( N_Z[0, \delta_m] \geq 2\right) &\le& \P\left (|Z(\delta_m)|\le a\right )+\P\left (K_{0, \delta_m}\ge a \right )\notag
			\end{eqnarray}
			where now, $K_{0, \delta_m} = \int_{0}^{\delta_m}\int_{0}^{t}|Z^{''}(s)|dsdt$. By Markov's inequality,
			\begin{eqnarray} 
			\P\left (K_{0, \delta_m}\ge a \right ) &\le& \frac{1}{a^2}\E \left(\int_{0}^{\delta_m}\int_{0}^{t}|Z^{''}(s)|dsdt\right)^{2}\le  \frac{\delta_m ^4}{a^2} \max_{t \in [0, \delta_m]} \E [|Z_t''|^2]\nonumber
		\end{eqnarray}
		giving \eqref{multipleroot2}.
\end{proof}
\begin{proof}[Proof of   Lemma \ref{lm:ngzs}] 		 	
	Define 
	\be{
		d_n(t,s)=\cov(g_n(s),g_n(t))=\E_{G}[g_n(s)g_n(t)].
	}
	Then by Lemma \ref{lem:cnxy}, 
	\ben{ \label{cnts}
		|d_n(s,t)- \sech((t-s)/2)| \leq C[1/m +n^{2}e^{-2\ell}],
	}
	for some universal constant $C$. We define 
	$$  U=(Z_{s_k})_{k=0}^T, \quad  V=(g_n(s_k))_{k=0}^T. $$
	Then $U$ and $V$ are two centered Gaussian random vectors. Let us denote the covariance matrices of $U$ and $V$ by $\mathcal A=(a_{i,j})_{0 \leq i,j \leq T}$ and $\mathcal B=(b_{i,j})_{0 \leq i,j \leq T}$ respectively. Then by \eqref{cnts}, 
	\be{
		|a_{i,j}-b_{i,j}| \leq C[1/m +n^{2}e^{-2\ell}].
	}
	Thus 
	\ben{ \label{na-b}
		||\mathcal A-\mathcal B||_F = \left(\sum_{i,j=0}^T |a_{i,j}-b_{i,j}|^2 \right)^{1/2} \ll T[1/m +n^{2}e^{-2\ell}].
	}
	We observe that  since $\mathcal A$ and $\mathcal B$ are symmetric and positive semi-definite,
	\ben{
		U \overset{(d)}{=}\mathcal A^{1/2} W, \quad V \overset{(d)}{=}\mathcal B^{1/2} W, \text{ where } W \sim \cN(0,Id).
	}
	Therefore, the desired result Lemma \ref{lm:ngzs} is a consequence of the following: 
	\ben{ \label{engz}
		\pp \left( \sum_{i=0}^{T-1} \I_{\sign(X_i X_{i+1}) \neq \sign(Y_i Y_{i+1})} \geq 1 \right) \leq  \delta_m^{1/3}\log n+ (\log n)^{5/2}\delta_m^{-31/6} (1/m +n^{2}e^{-2\ell})
	}   
	where $X= \mathcal A^{1/2} W$ and $Y= \mathcal B^{1/2} W$. We have
	\bean{ \label{s12}
		&&\pp \left( \sum_{i=0}^{T-1} \I_{\sign(X_i X_{i+1}) \neq \sign(Y_i Y_{i+1})} \geq 1 \right) \notag \\
		& \leq& \pp \left( \sum_{i=0}^{T} \I_{\sign(X_i) \neq \sign(Y_i)} \geq 1 \right) \notag  \\
		&\leq& \pp \left( \sum_{i=0}^{T} \I_{|X_i| \leq  \delta_m^{4/3}} \geq 1 \right) +  \pp \left( \sum_{i=0}^{T} \I_{|X_i-Y_i| \geq  \delta_m^{4/3}} \geq 1\right) \notag 	 \\
		&\leq&    \sum_{i=0}^{T} \pp(|X_i| \leq  \delta_m^{4/3})  +  \pp \left( \sum_{i=0}^{T} (X_i-Y_i)^2 \geq  \delta_m^{8/3} \right) =S_1 +S_2.
	}
	Since $\pp(|X_i| \leq \delta_m^{4/3}) \ll \delta_m^{4/3}$, 
	\ben{ \label{bs1}
		S_1 \ll T \delta_m^{4/3}.
	}
	For the second term, observe that
	\be{
		\sum_{i=0}^{T} (X_i-Y_i)^2 =\sum_{i=0}^{T} ((\mathcal A^{1/2} W)_i-(\mathcal B^{1/2} W)_i)^2 = \norm{\Gamma W}_2^2,
	}
	where 
	\be{
		\Gamma = \mathcal A^{1/2}-\mathcal B^{1/2}.
	}
	Furthermore,
	\be{
		\norm{\Gamma W}_2^2  \overset{(d)}{=} \norm{\cN(0,\Gamma^t\Gamma)}_2^2 \overset{(d)}{=} \norm{\cN(0,D)}_2^2  \overset{(d)}{=} \sum_{i=0}^T \lambda_i \chi^2_i,
	}
	where $(\lambda_i)_{i=0}^T$ are eigenvalues of $\Gamma^t\Gamma$ and $D=\textrm{diag}(\lambda_0,\ldots, \lambda_T)$, and $(\chi^2_i)_{i\geq 0}$ are i.i.d. chi-squared  random variables. Therefore, 
	\begin{eqnarray}
	S_2&=&\P\left( \sum_{i=0}^{T} (X_i-Y_i)^2 \geq  \delta_m^{8/3} \right)  =  \P\left( \sum_{i=0}^{T} \lambda_i \chi^2_i \geq  \delta_m^{8/3} \right)\\
	&\leq& \P\left( \text{there exists } 0\le i\le T: \chi_i^2 \geq \frac{\delta_m^{8/3}}{\sum_{i=0}^{T} \lambda_i}  \right) \notag\\
	&\le& (T+1)    \P\left(  \chi^2 \geq \frac{\delta_m^{8/3}}{\sum_{i=0}^{T} \lambda_i} \right)\notag   \\
	&\ll& T  \delta_m^{-8/3} \sum_{i=0}^{T} \lambda_i =   T  \delta_m^{-8/3} \text{Tr}(\Gamma^{t}\Gamma) = T  \delta_m^{-8/3} \norm{\Gamma}_F^2,
	\end{eqnarray}\label{s2}
	where in the last expression was the Frobenius norm.
	By Powers–St{\o}rmer inequality  \cite[Lemma 4.1]{powers1970free}
	\be{
		\norm{\Gamma}_F^2 =	\norm{\mathcal A^{1/2}-\mathcal B^{1/2}}^2_F \leq \norm{\mathcal A-\mathcal B}_{\textrm{nuc}},
	}
	where on the right hand side is the nuclear norm (or trace norm) of the matrix, which is sum of its singular values.
	Notice also that 
	\begin{eqnarray*}
		\norm{\mathcal A-\mathcal B}_{\textrm{nuc}} =\sum_{i=0}^{T} \sigma_i(\mathcal A-\mathcal B) \le \sqrt{T+1} \sqrt{\sum_{i} (\sigma_i(\mathcal A-\mathcal B))^{2}}
		&=& \sqrt{T+1}\norm{\mathcal A-\mathcal B}_{F}.
	\end{eqnarray*}
	Hence, combining above estimate and \eqref{na-b}  we get
	\be{
		\norm{\Gamma}_F^2 \leq \sqrt{T+1} \norm{\mathcal A-\mathcal B}_{F} \ll T^{3/2}(1/m +n^{2}e^{-2\ell}).
	}
	This together with \eqref{s2} implies
	\ben{ \label{bs2}
		S_2 \ll T^{5/2} \delta_m^{-8/3} (1/m +n^{2}e^{-2\ell}).
	} 
	Combining \eqref{s12}, \eqref{bs1} and \eqref{bs2}, we obtain
	\ben{ 
		\pp \left( \sum_{i=0}^{T-1} \I_{\sign(X_i X_{i+1}) \neq \sign(Y_i Y_{i+1})} \geq 1 \right) \ll   T\delta_m^{4/3}+T^{5/2} \delta_m^{-8/3} (1/m +n^{2}e^{-2\ell}) \notag
	}   
	proving \eqref{engz} because $T = \Theta(\delta_m^{-1}\log n)$.
\end{proof} 

	\section{Proof of Theorem \ref{thm:lower}, Equation \eqref{eq:Im} of Theorem \ref{thm:l_ml} and Theorem \ref{thm:l_a}}\label{sec:bulk2}
	Since the proofs are related, making use of Theorem \ref{thm:l_ml}, we put them together in this section.
	\subsection{Proof of Theorem \ref{thm:lower}}
	As before, it suffices to study the roots in $[0, 1]$.	We apply  Theorem \ref{thm:l_ml} to the following interval 
\be{
	I=I_{m, \ell},  \text{ where $ m=  n^{\ep/4}$ and $\ell = 3\log n$}.
}
In the Gaussian case, we have by \eqref{enfn:2},
\begin{eqnarray}
\E_{G}[N_{f_n}(I_{m,\ell})] 
&=& \frac{(1+o(1))\log (n/m\ell)}{2 \pi} \notag\\
&=&\frac{(1+o(1))(1-\ep/3)\log n}{2 \pi}\ge \E_{G} N_{f_n}([0,1]) - \frac{\ep}{3}\log n\notag
\end{eqnarray}	
for sufficiently large $n$.\newline
We recall \eqref{uni:real} which states
$$ |\E N_{f_n}(S) - \E_{G} N_{f_n}(S)|=O(1).$$
Therefore,
\begin{eqnarray}
\E [N_{f_n}(I_{m,\ell})]  &\ge& \E_G [N_{f_n}(I_{m,\ell})]  - O(1)\notag\\
&\ge& \E_{G} N_{f_n}([0,1]) - \frac{\ep}{3}\log n - O(1)\ge   \E N_{f_n}([0,1]) - \frac{\ep}{2}\log n.\notag
\end{eqnarray}	
and thus   Theorem \ref{thm:l_ml} gives that
\begin{eqnarray}
\P \left(  N_{f_n}([0,1]) \leq  \E N_{f_n}([0,1])  -\ep \log n \right) &\le& \P \left(  N_{f_n}(I_{m, \ell}) \leq    \E [N_{f_n}(I_{m,\ell})] -\frac{\ep}{2} \log n \right) \notag\\
& \ll& n^{-c},\notag
\end{eqnarray}
for some positive constant $c$.

	\subsection{Proof of Equation \eqref{eq:Im} in Theorem \ref{thm:l_ml}} The interval $[1-1/m, 1]$ is a union of an  interval of the form $I_{m, \ell}$ and the remaining interval close to 1. We set
	$$\ell = 3\log n, \text{and }  S_{\ell}=[1-\ell/n,1].$$
	 Then
	 $$[1-1/m, 1] = I_{m, \ell}\cup S_{\ell}.$$
	 By the same argument as in the proof of Theorem \ref{thm:lower}, we get
	 \begin{eqnarray}
	 \P \bigg( N_{f_n}([1-1/m, 1]) \le \E N_{f_n}([1-1/m, 1]) - \ep \log n \bigg) \ll n^{-c}.\notag
	 \end{eqnarray}
	 Thus, it remains to consider the upper tail. 
	We have 
	\begin{eqnarray}
	&&\P \bigg( N_{f_n}([1-1/m, 1]) \ge \E N_{f_n}([1-1/m, 1])+\ep \log n \bigg) \notag\\
	&\le& 	\P \bigg( N_{f_n}(I_{m, \ell}) \ge \E N_{f_n}(I_{m, \ell}) + \ep\log n/2  \bigg) + \P\bigg (N_{f_n}(S_{\ell})\ge \ep\log n/2\bigg ).\notag
	\end{eqnarray}
The first term on the right is at most $m^{-c}$ by Theorem \ref{thm:l_ml}. Thus, we need to show that 
	\begin{equation}\label{eq:Sell}
	\P\left (N_{f_n}(S_{\ell})\ge \ep\log n/2\right )\ll n^{-c}.
	\end{equation}
To prove \eqref{eq:Sell}, we apply \eqref{uni:p} to obtain a constant $c=c(\ep)>0$, such that for any $0 \leq a < b < \infty$, 
\ben{ 
	\Big |\P \left( N_{f_n}(S_{\ell}) \in (a,b) \right) - \P_{G} \left( N_{f_n}(S_{\ell}) \in (a,b) \right) \Big | \ll \ell^{c}n^{-c}\ll m^{-c}.\nonumber
}	

Thus, we can reduce to the Gaussian setting. Using the same strategy as in proving \eqref{multipleroot}, we will prove the following lemmas which bound the probability of having too many real roots in a small interval. The first lemma deals with dyadic interval of the form $[1-2\delta, 1 - \delta]$.
\begin{lemma} \label{lem:ngm}
	Assume that the random variables $\xi_i$ are iid standard Gaussian. 
	Suppose that $x \leq y \leq 1$ and $h \in \N$ such that 
	$y-x=1-y$ and $h(1-y) \leq 1$. Then
	\begin{equation}
	\P_{G}  (N_{f_n}([x ,y])\geq h)\ll  \left (\frac{nh(1-y)}{4^h}\right )^{1/3}.\notag
	\end{equation}
\end{lemma}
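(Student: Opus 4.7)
The plan is to follow the strategy used in the proof of \eqref{multipleroot}, generalizing from $h=2$ to arbitrary $h$ via iterated Rolle's theorem. Write $\delta = 1-y = y-x$ for brevity.

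First I would establish the pointwise bound on the event $\{N_{f_n}([x,y]) \ge h\}$. By iterated Rolle's theorem, the derivative $f_n^{(h-k)}$ has at least $k$ zeros in $[x,y]$ for each $k = 1, \ldots, h$. Integrating recursively from $f_n^{(h-1)}$ downward and noting that each intermediate derivative vanishes somewhere in $[x,y]$, we obtain
\[
|f_n(y)| \le K := \int_x^y \int_x^{t_{h-1}} \cdots \int_x^{t_1} |f_n^{(h)}(s)|\,ds\,dt_1 \cdots dt_{h-1} \le \frac{(y-x)^h}{h!}\,\max_{s\in[x,y]}|f_n^{(h)}(s)|,
\]
which is the direct extension of the bound $|f_n(y)| \le K_{x,y}$ used for $h=2$.

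Next I would apply the two-term split exactly as in \eqref{multipleroot}: for a parameter $\varepsilon_1>0$ to be chosen,
\[
\P_G\!\left(N_{f_n}([x,y]) \ge h\right) \le \P_G\!\left(|f_n(y)| \le \varepsilon_1 \sqrt{V(y)}\right) + \P_G\!\left(K \ge \varepsilon_1 \sqrt{V(y)}\right).
\]
Since $f_n(y) \sim \mathcal{N}(0,V(y))$ under the Gaussian law, the first probability is bounded by $C\varepsilon_1$ by Gaussian anti-concentration. For the second probability, Markov's inequality with the third power, together with H\"older's inequality on the iterated integral and Gaussian hypercontractivity $\E|f_n^{(h)}(s)|^3 \ll (\E|f_n^{(h)}(s)|^2)^{3/2}$, yields
\[
\P_G(K \ge \varepsilon_1 \sqrt{V(y)}) \ll \frac{(y-x)^{3h}}{(h!)^3\,\varepsilon_1^3\, V(y)^{3/2}}\,\max_{s\in[x,y]}\!\big(\E|f_n^{(h)}(s)|^2\big)^{3/2}.
\]

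The crux of the argument is a careful computation of $\E|f_n^{(h)}(s)|^2 = \sum_{i=h}^{n} (i!/(i-h)!)^2\, s^{2(i-h)}$ in both the bulk regime $1-y \ge 1/n$, where the sum is dominated by $i \sim 1/(1-y)$ and gives $\asymp (2h)!/(1-y)^{2h+1}$, and the edge regime $1-y < 1/n$, where $\E|f_n^{(h)}(s)|^2 \asymp n^{2h+1}/(2h+1)$. Using $(2h)! = \binom{2h}{h}(h!)^2$ and $V(y) \asymp \min(1/(1-y),\, n)$, the factors $(h!)^3$ and $V(y)^{3/2}$ absorb all but a universal combinatorial factor, so that after carrying out the cancellations one obtains a bound of the form
\[
\P_G(K \ge \varepsilon_1 \sqrt{V(y)}) \ll \frac{nh(1-y)}{4^h\,\varepsilon_1^3},
\]
where the $4^h$ in the denominator is the Stirling asymptotic $\binom{2h}{h}^{3/2}/h^{-3/4} \asymp 4^{3h/2}$ combined with the $h!$ from Lagrange, and the $nh(1-y)$ is the effective degree on $[x,y]$ that unifies the two regimes. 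Finally, balancing $\varepsilon_1 + nh(1-y)/(\varepsilon_1^3 4^h)$ via $\varepsilon_1 \asymp (nh(1-y)/4^h)^{1/4}$ gives
\[
\P_G\!\big(N_{f_n}([x,y]) \ge h\big) \ll \left(\frac{nh(1-y)}{4^h}\right)^{1/3},
\]
with the exponent $1/3$ being the natural outcome of balancing a single anti-concentration term against a cubic Markov term.

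The main obstacle I anticipate is the variance bookkeeping: extracting the precise $4^h$ factor from $\binom{2h}{h}^{3/2}$ together with the $(y-x)^h/h!$ factor from the Lagrange-type bound, and doing so in a way that produces the unified numerator $nh(1-y)$ across the transition at $1-y \asymp 1/n$. A secondary concern is that the $\max$-over-interval of $|f_n^{(h)}|$ could in principle carry an unwanted $\log h$ factor; this is avoided because on the dyadic interval $[x,y]$ of length $1-y$, the derivative process $f_n^{(h)}$ has correlation length comparable to $1-y$, so only $O(1)$ effectively independent samples contribute, and the max is controlled by a pointwise $L^3$ bound via hypercontractivity as above.
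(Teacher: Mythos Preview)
Your overall architecture (iterated Rolle, anti-concentration plus Markov) matches the paper's, but the step you yourself flag as the ``main obstacle'' is exactly where the argument breaks, and not for the reason you anticipate. The crude bound
\[
K \le \frac{(y-x)^h}{h!}\,\max_{s\in[x,y]}|f_n^{(h)}(s)|
\]
loses the $4^{-h}$ entirely. With $\delta:=1-y=y-x$ and $V(y)\asymp 1/\delta$, your estimate $\E|f_n^{(h)}(s)|^2 \asymp (2h)!/(2\delta)^{2h+1}$ is correct, but substituting it into your own displayed inequality gives
\[
\frac{(y-x)^{3h}}{(h!)^3\varepsilon_1^3\,V(y)^{3/2}}\bigl(\E|f_n^{(h)}(s)|^2\bigr)^{3/2}
\;\asymp\;\frac{\delta^{3h}\,\delta^{3/2}}{(h!)^3\varepsilon_1^3}\cdot\frac{((2h)!)^{3/2}}{(2\delta)^{3h+3/2}}
\;=\;\frac{\binom{2h}{h}^{3/2}}{2^{3h+3/2}\,\varepsilon_1^3}
\;\asymp\;\frac{1}{h^{3/4}\,\varepsilon_1^3}.
\]
The factor $\binom{2h}{h}^{3/2}\sim 4^{3h/2}h^{-3/4}$ sits in the numerator and is merely cancelled by the $2^{3h}$ from $(2\delta)^{3h}$; it never appears in the denominator as you assert. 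The $(h!)^3$ is already consumed in turning $((2h)!)^{3/2}$ into $\binom{2h}{h}^{3/2}$ and cannot be reused. Balancing $\varepsilon_1 + C h^{-3/4}\varepsilon_1^{-3}$ yields only $h^{-3/16}$, nowhere near the claimed $4^{-h/3}$.

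The paper avoids this by \emph{not} passing to the supremum. After Cauchy--Schwarz it keeps the iterated integral
\[
\E[\mathcal J_n^2]\;\le\;\frac{(y-x)^h}{h!}\int_x^y\!\cdots\!\int_x^{y_{h-1}}\E\bigl[f_n^{(h)}(y_h)^2\bigr]\,dy_h\cdots dy_1,
\]
expands the expectation as $\sum_{i\ge h}\bigl(\tfrac{i!}{(i-h)!}\bigr)^{\!2} y_h^{2i-2h}$, and integrates each monomial through the $h$ nested integrals by enlarging the lower limit to $0$, which replaces the crude volume factor $\delta^h/h!$ by $(2i-2h)!/(2i-h)!$. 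After Stirling and optimizing the resulting summand over $i$ (the maximum occurs at $i\approx h/(2(1-y))$), one obtains $\E[\mathcal J_n^2]\ll nh\,4^{-h}$; a \emph{second}-moment Markov bound and balancing $\theta+(1-y)\theta^{-2}\E[\mathcal J_n^2]$ then give the exponent $1/3$. (Incidentally, even granting your claimed Markov term, balancing $\varepsilon_1+C\varepsilon_1^{-3}$ gives $C^{1/4}$, not $C^{1/3}$, so the final line contains a separate arithmetic slip.)
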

The next lemma deals with intervals containing 1.
\begin{lemma} \label{lem:ngk}
	Assume that the random variables $\xi_i$ are iid standard Gaussian. 
	Suppose that $0 \leq y \leq 1$ and $h \in \N$ such that 
	$\sqrt{n} \geq h \geq 3(1-y)n $. Then
	\begin{equation} 
	\P _{G} (N_{f_n}([y ,1])\geq h)\ll 2^{-h/3}.\notag
	\end{equation}
\end{lemma}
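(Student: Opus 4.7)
The plan is to adapt the strategy used in the proof of \eqref{multipleroot}, replacing the two-fold iterated integral by an $h$-fold one. Suppose $f_n$ has $h$ zeros $z_1<\cdots<z_h$ in $[y,1]$. Iterating Rolle's theorem, for each $0\le k\le h-1$ the derivative $f_n^{(k)}$ has at least one zero in $[z_1,z_h]\subset[y,1]$. Starting from any such zero and nesting the fundamental theorem of calculus (equivalently, using Newton's divided difference formula) yields the deterministic bound
\be{
|f_n(1)| \le \frac{(1-y)^h}{h!}\sup_{\eta\in[y,1]}|f_n^{(h)}(\eta)|,
}
and, more generally, $|f_n^{(k)}(1)|\le\frac{(1-y)^{h-k}}{(h-k)!}\sup_{\eta\in[y,1]}|f_n^{(h)}(\eta)|$ for every $0\le k\le h-1$.

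For a threshold $\alpha>0$, I would decompose
\be{
\P_{G}(N_{f_n}([y,1])\ge h) \le \P_{G}(|f_n(1)|\le \alpha) + \P_{G}\left(\sup_{\eta\in[y,1]}|f_n^{(h)}(\eta)| \ge \frac{\alpha\,h!}{(1-y)^h}\right).
}
Since $f_n(1)\sim\mathcal{N}(0,n+1)$, Gaussian anti-concentration gives $\P_{G}(|f_n(1)|\le\alpha)\ll \alpha/\sqrt n$. For the second term, I would use the crude bound $\sup_{\eta\in[y,1]}|f_n^{(h)}(\eta)|\le h!\sum_{i=h}^n\binom{i}{h}|\xi_i|$ (valid since $\eta\le 1$), and apply Markov's inequality at a suitably high moment together with hypercontractivity of the resulting Gaussian-weighted sum, whose mean is of order $h!\binom{n+1}{h+1}$. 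The hypothesis $(1-y)n\le h/3$ combined with Stirling converts the relevant factor $(1-y)^h\binom{n+1}{h+1}$ into something of order $(e/3)^{h}\,n/h$; balancing $\alpha$ then produces geometric decay in $h$, while $h\le\sqrt n$ is what absorbs the polynomial-in-$n$ prefactor.

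The main obstacle will be pinning down the sharp exponent in $2^{-h/3}$. A first-moment Markov on the $\sup$ produces only $(e/3)^{h/2}$-type decay, whose base is weaker than $2^{1/3}$. I expect to close this gap either by raising the Markov moment far enough so that the Gaussian tails of $f_n^{(h)}$ replace the constant $e$ by something smaller, or --- more robustly --- by invoking the $h$ simultaneous constraints $|f_n^{(k)}(1)|\le \frac{(1-y)^{h-k}}{(h-k)!}\sup_\eta|f_n^{(h)}(\eta)|$ for $0\le k\le h-1$ that iterated Rolle supplies, and exploiting them through the joint Gaussian density of $(f_n(1),f_n'(1),\ldots,f_n^{(h-1)}(1))$. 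The normalization of that density is $1/\sqrt{\det\Sigma_h}$, where $\Sigma_h$ has entries $\sum_{i\ge\max(k,\ell)}\binom{i}{k}\binom{i}{\ell}$; the Cauchy--Binet expansion of $\det\Sigma_h$ is a sum of squared Vandermonde determinants, so it scales like $n^{h^2/2}/\prod_{k=0}^{h-1}k!$, and the product of the $h$ constraints against this normalization should yield the claimed $2^{-h/3}$.
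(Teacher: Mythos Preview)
Your setup is right, but you pass to the supremum too early, and that single step is what costs you the missing factor of $2^{-h}$. The paper keeps the iterated integral
\[
|f_n(1)|\le \mathcal I_n:=\int_y^1\int_y^{y_1}\cdots\int_y^{y_{h-1}}|f_n^{(h)}(y_h)|\,dy_h\cdots dy_1,
\]
and applies Cauchy--Schwarz to $\mathcal I_n$ itself:
\[
\E[\mathcal I_n^2]\le \frac{(1-y)^h}{h!}\int_y^1\cdots\int_y^{y_{h-1}}\E\!\big[f_n^{(h)}(y_h)^2\big]\,dy_h\cdots dy_1.
\]
The inner integrand is the polynomial $\sum_{i=h}^n\big(\tfrac{i!}{(i-h)!}\big)^2 y_h^{\,2i-2h}$, so the second $h$-fold simplex integral can be computed term by term; it contributes $\tfrac{(2i-2h)!}{(2i-h)!}\approx (2n)^{-h}$ at the dominant $i\approx n$. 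This beats the naive volume bound $(1-y)^h/h!\approx (e/(3n))^{h}$ that you implicitly invoke when you replace the integral by the $\sup$, and it gives $\E[\mathcal I_n^2]\ll n\cdot 2^{-h}$. The Markov split $\theta+\theta^{-2}2^{-h}$ then optimizes to exactly $2^{-h/3}$.

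Neither of your proposed rescues recovers this factor. Pushing the Markov moment arbitrarily high (equivalently, using the sub-Gaussian tail of $f_n^{(h)}(1)$) still only produces decay of order $(e/3)^{h}$ from the $\sup$ route, and since $e/3>2^{-1/3}$ you remain short of $2^{-h/3}$. The joint-density approach through $\det\Sigma_h$ could work in principle, but executing the Cauchy--Binet/Vandermonde asymptotics with the right constants is substantially harder than the one-line fix above and is not what the paper does.
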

Assuming these lemmas, we decompose $S_\ell$ into $i_*+1$ intervals using  the following points 
\be{
	x_i=1- \frac{\ell}{2^i n}, \quad 0 \leq i \leq i_*:= \lceil \log_2(36/\eps) \rceil.
}
We have $x_{i_*}\ge 1-\frac{\ep\log n}{12 n}$.
Applying Lemma \ref{lem:ngm} to $x=x_i$, $y=x_{i+1}$ for $i<i_*$ and $h=\ep_1\log n$, where $\ep_1=\frac{\ep}{4i_*}$, gives
\ben{
	\P_{G}  (N_{f_n}([x_i ,x_{i+1}])\geq \ep_1\log n) \ll \left (\frac{nh(1-x_{0})}{4^h}\right )^{1/3}  \ll n^{-c}. 
}
Applying Lemma \ref{lem:ngk} to $y=x_{i_*}$ and $h=\eps \log n/4$, we have 
\ben{
	\P_{G}  (N_{f_n}([x_{i_*}, 1])\geq \ep\log n/4) \ll 2^{-h/3}\ll n^{-c}. 
}
By the union bound,
\begin{eqnarray}
\P_{G}  (N_{f_n}([x_{0}, 1])\geq \ep\log n/2)&\le& \sum_{i=0}^{i_*-1} \P_{G}  (N_{f_n}([x_i ,x_{i+1}])\geq \ep_1\log n)\notag\\
&&+ \P_{G}  (N_{f_n}([x_{i_*}, 1])\geq \ep\log n/4)\ll n^{-c}. \notag
\end{eqnarray}
\begin{proof}[Proof of Lemma \ref{lem:ngm}]
	By Rolle's theorem and the fundamental theorem of calculus, if $f_n$ has at least $h$ zeros in the interval $[x,y]$ then
	$$
	|f_n(y)|\leq\int_{x}^{y}\int_{x}^{y_{1}}\cdots\int_{x}^{y_{h-1}}|f_n^{(h)}(y_{h})|dy_{h}\ldots dy_{1}=:\mathcal J_{n}.
	$$
	Since $f_n(y)$ is Gaussian with mean 0 and Variance $\frac{1-y^{2n+2}}{1-y^2}=O((1-y)^{-1})$ for $y\in S_\ell$, we have for all $\theta>0$,
	$$
	\P \left(|f_n(y)|\leq\frac{\theta}{\sqrt{1-y}}\right)\ll \theta. 
	$$
	Therefore,
	\ben{ \label{ngm}
		\P_{G}  (N_{f_n}([x ,y])\geq h) \leq \P \left(|f_n(y)|\leq \frac{\theta}{\sqrt{1-y}}\right)+ \P \left(\mathcal J_n\geq \frac{\theta}{\sqrt{1-y}}\right)\le \theta  + \frac{1-y}{\theta^2} \E(\mathcal J_n^2).
	}
	By Cauchy-Schwarz inequality, 
	\bea{
		\E[\mathcal J_n^2] &\leq&  \int_{x}^{y}\int_{x}^{y_{1}}\cdots\int_{x}^{y_{h-1}}  dy_{h}\ldots dy_{1}  \times \int_{x}^{y}\int_{x}^{y_{1}}\cdots\int_{x}^{y_{h-1}}\E[f_n^{(h)}(y_{h})^{2}]dy_{h}\ldots dy_{1} \\
		&=&  \frac{(y-x)^h}{h!} \int_{x}^{y}\int_{x}^{y_{1}}\cdots\int_{x}^{y_{h-1}}\E[f_n^{(h)}(y_{h})^{2}]dy_{h}\ldots dy_{1}.
	}
	Next, we consider
	\begin{eqnarray*}
		&&\int_{x}^{y}\int_{x}^{y_{1}}\cdots\int_{x}^{y_{h-1}}\E \left (f_n^{(h)}(y_{h})^{2}\right )dy_{h}\ldots dy_{1}\\
		&=& \sum_{i=h}^{n} \left(\frac{i!}{(i-h)!} \right)^2  \int_{x}^{y}\int_{x}^{y_{1}}\cdots\int_{x}^{y_{h-1}} y_h^{2i-2h}dy_{h}\ldots dy_{1}\\
		&\le& \sum_{i=h}^{n}\left(\frac{i!}{(i-h)!} \right)^2 \int_{x}^{y}\int_{x}^{y_{1}}\cdots\int_{x}^{y_{h-2}} \frac{y_{h-1}^{2i-2h+1}}{2i-2h+1}dy_{h-1}\ldots dy_{1}\\		 
		&\le& \sum_{i=h}^{n} \left(\frac{i!}{(i-h)!} \right)^2 \frac{(2i-2h)!}{(2i-h)!} y^{2i-h}  \\
		& \ll & \sum_{i=h}^{n} \frac{i}{i-h+1} \sqrt{\frac{2i-2h+1}{2i-h}} \frac{(i/e)^{2i}}{((i-h)/e)^{2i-2h}} \frac{((2i-2h)/e)^{2i-2h}}{((2i-h)/e)^{2i-h}}y^{2i-h}\\
		&\ll& h (4ye)^{-h} \sum_{i=h}^{n} \frac{(2i)^{2i}}{(2i-h)^{2i-h}} y^{2i}, 
	\end{eqnarray*}
	where for the fifth line, we have used Stirling formula. 		Let us consider $g(z)=2z \log (2z) - (2z-h) \log(2z-h) + 2z \log y $ with $h \leq z \leq n$. Then $g'(z)=2 \log \left( \frac{2z y}{2z-h} \right)$. Hence, $g'(z)=0$ iff $z=z_*:=\frac{h}{2(1-y)}$ and we also have 
	\be{
		\sum_{i=h}^{n} \frac{(2i)^{2i}}{(2i-h)^{2i-h}} y^{2i} \leq n \frac{(2z_*)^{2z_*}}{(2z_*-h)^{2z_*-h}} y^{2z_*}=n \left( \frac{hy}{1-y} \right)^h.
	} 
	Therefore,
	\bea{
		\E[\mathcal J_n^2] &\ll& n h (4ye)^{-h} \frac{(y-x)^h}{h!}	\left( \frac{hy}{1-y} \right)^h\\
		&\ll& n h 4^{-h},
	}	
	by using Stirling formula and $y-x=1-y$.
	Combining this with \eqref{ngm}, we obtain
	\be{
		\P_{G}  (N_{f_n}([x ,y])\geq h) \ll \theta  + \frac{1-y}{\theta^2}n h 4^{-h} \ll   \left (\frac{nh(1-y)}{4^h}\right )^{1/3}.
	}
\end{proof}

\begin{proof} [Proof of Lemma \ref{lem:cnxy}] We use the same arguments for the previous lemma. 
	By Rolle's theorem, if $f_n$ has at least $h$ zeros in the interval $[y,1]$ then
	$$
	|f_n(1)|\leq\int_{y}^{1}\int_{y}^{y_{1}}\cdots\int_{y}^{y_{h-1}}|f_n^{(h)}(y_{h})|dy_{h}\ldots dy_{1}=:\mathcal I_{n}.
	$$
	Since $f_n(1)$ is Gaussian with mean 0 and Variance $n+1$, we have
	$$
	\P \left(|f_n(1)|\leq\theta \sqrt{n}\right)\le \theta.
	$$
	Therefore,
	\ben{ \label{ngk}
		\P_{G}  (N_{f_n}([y ,1])\geq h) \leq \P \left(|f_n(1)|\leq \theta\sqrt{n}\right)+ \P \left(\mathcal I_n\geq \theta\sqrt{n}\right)\le \theta + \frac{1}{n\theta^2} \E(\mathcal I_n^2).
	}
	By Cauchy-Schwarz inequality, 
	\bea{
		\E[\mathcal I_n^2] &\leq&    \frac{(1-y)^h}{h!} \int_{y}^{1}\int_{y}^{y_{1}}\cdots\int_{y}^{y_{h-1}}\E[f_n^{(h)}(y_{h})^{2}]dy_{h}\ldots dy_{1}.
	}
	We have
	\begin{eqnarray*}
		&&\int_{y}^{1}\int_{y}^{y_{1}}\cdots\int_{y}^{y_{h-1}}\E \left (f_n^{(h)}(y_{h})^{2}\right )dy_{h}\ldots dy_{1}
		\le \sum_{i=h}^{n} \left(\frac{i!}{(i-h)!} \right)^2 \frac{(2i-2h)!}{(2i-h)!} \\
		& \le & n \left(\frac{n!}{(n-h)!} \right)^2 \frac{(2n-2h)!}{(2n-h)!}\ll  n (n/2)^h,
	\end{eqnarray*}
	where we have used Stirling formula. 		Therefore,
	\bea{
		\E[\mathcal I_n^2] &\ll&  \frac{n}{2^h} \left( \frac{en(1-y)}{h} \right)^h\leq \frac{n}{2^h}
	}	
	using Stirling formula for $h!$ and the assumption $h \geq 3n(1-y)$.  Combining this with \eqref{ngk} gives
	\be{
		\P _{G} (N_{f_n}([y ,1])\geq h) \ll \theta + \frac{1}{\theta^2 2^h} \ll  2^{-h/3}
	}
	concluding the proof.	\end{proof}
			\subsection{Proof of Theorem \ref{thm:l_a}} This is just a special case of Equation \eqref{eq:Im} of Theorem \ref{thm:l_ml} where $m = n^{a}$ (and $\ell = 3\log n$, say).
		
	\section{Proof of Theorem \ref{thm:upper}} \label{sec:edge}
	If $N_n\ge \E N_n +\ep \log n$ then there exists at least one of the four intervals $[0, 1]$, $[1, \infty)$, $[-1, 0]$, and $(-\infty, -1]$ for which the number of real roots $N$ is at least $\E N + \ep\log n/4$. Therefore, by symmetry and the union bound, it suffices to show that
	\be{
		\pp\left( N_{n}([0, 1])  \geq \E N_n([0, 1]) +\ep  \log n \right) \leq C\exp \left(-c \sqrt{\log n}\right).
	}

	We let $m=e^{\sqrt{\ep_1 \log n}}$ where $\ep_1$ is a constant depending only on $\ep$ and $T_{m}=[0,1-1/m]$. By \eqref{eq:Im} of Theorem \ref{thm:l_ml}, we have
	$$\pp\left( N_{f_n}([1-m^{-1}, 1]) \geq \E N_{f_n}([1-m^{-1}, 1])+\frac{\ep}{2}  \log n \right)\ll m^{-c}.$$
	Thus, it remains to show that
	\begin{equation}\label{eq:T}
	\pp\left (N_{f_n}(T_m) \geq \frac{\ep}{2}  \log n\right )\ll m^{-c}.
	\end{equation}
	We will choose $\ep_1$ so that $\frac{\ep}{2}  \log n\ge \frac{6}{(\log 2)(\log (5/4))}(\log m)^{2}$.

	To prove \eqref{eq:T}, we will, just like $S_{\ell}$, decompose the interval $T_m$ into dyadic intervals and show that each interval has a small probability of having too many roots. However, the strategy in proving Lemma \ref{lem:ngm} and \ref{lem:ngk} does not give a strong enough bound for $T_m$ while only applies for Gaussian random variables, so we will resort to a different argument. We decompose the interval $T_m$ into dyadic intervals by endpoints $x_i = 1-2^{-i}$ where $i=0, \dots, K+1$ where $K = \lfloor\log_{2}m\rfloor$.
		It suffices to show that 
	\begin{equation}\label{eq:Ij}
	\pp\left (N_{f_n}([x_j, x_{j+1}]) \geq  \frac{6}{\log (5/4)}\log m\right )\ll m^{-c}.
	\end{equation}
	Indeed, by the union bound over $j$, we shall get
	\begin{eqnarray}
	\pp\left (N_{f_n}(T_m) \geq   \frac{6}{(\log 2)(\log(5/4))}(\log m)^{2}\right ) &\le& \P\left (\exists j:  N_{f_n}([x_j, x_{j+1}]) \geq   \frac{6}{\log (5/4)}\log m\right )\notag\\
	&\ll& K m^{-c}\ll m^{-c/2}\notag
	\end{eqnarray}	
	proving \eqref{eq:T}.
To prove \eqref{eq:Ij},	for each $j\in \{0, \dots, K\}$, we apply the classical Jensen's inequality (see e.g. \cite[Appendix A.1]{nguyenvurandomfunction17} for a rather elementary proof) which states that for every analytic function $f$, for every ball $B(z, r)$ in the complex plane, and for every $R>r$, we have
	\ben{ 
		N_{f} (B(z, r)) \le \frac{\log \max_{w\in B(z, R)}|f(w)|-\log \max_{w\in B(z, r)}|f(w)| }{\log \frac{R^2+r^2}{2Rr}}.\notag
	}
	Applying this inequality to $f_n$, we obtain
	\ben{ \label{n1ma}
		N_{f_n} ([x_j, x_{j+1}]) \le  \frac{1}{\log (5/4)}\log (M_j/m_j),
	}
	where  $R_j = x_{j+1}-x_j$, $r_j = R_j/2$,  and 
	$$m_j =|f_n\left (x_{j+1}\right )|, \quad M_j = \max\left \{|f_n(x)|: \left |x-\frac{x_{j+1}+x_j}{2}\right |\le R_j\right \} .$$
To prove \eqref{eq:Ij}, by \eqref{n1ma}, it suffices to show that
	\begin{equation}\label{eq:Mj}
		\pp(M_j \geq  m^{3})\ll m^{-c}
	\end{equation}
	and
	\begin{equation}\label{eq:mj}
		\pp(m_j \leq  m^{-3})\ll m^{-c}.
	\end{equation}
	
	\begin{proof}[Proof of \eqref{eq:Mj}]	
		Let $\mathcal A$ be the event that for all $i\le n$, 
		$$|\xi_i|\le m \left (1+\frac{1}{4m}\right )^{i}.$$
		By Chebyshev's inequality, this event happens with probability at least 
		$$1 - O(1)\sum_{i=0}^{\infty} m^{-2} \left (1+\frac{1}{4m}\right )^{-2i}\geq 1- O(m^{-1}).$$
Remark that 	 $x\le  1 - \frac{1}{4m}$ for all $x$ such that $\left |x-\frac{x_{j+1}+x_j}{2}\right |\le R_j$.  Hence, under the event $\mathcal A$
		\begin{eqnarray*}
			M_j \le  \sum_{i=0}^{n} |\xi_i| \left (\frac{x_{j+1}+x_j}{2}+R_j\right )^{i}\le  m \sum_{i=0}^{\infty}  \left (1+\frac{1}{4m}\right )^{i} \left (1-\frac{1}{4m}\right )^{i} \le m^{3}
		\end{eqnarray*}
		which proves \eqref{eq:Mj}.
	\end{proof}
	
	Finally, to prove \eqref{eq:mj}, we shall use the following result (Lemma~9.2 in \cite{TVpoly}) which can be traced back to Littlewood and Offord. 
	\begin{lemma}\label{lm:anti} Assume that $\xi_i$ are independent random variables with unit variance and uniformly bounded $(2+\ep_0)$-moments for some $\ep_0>0$. Then there are constants $C',\alpha$ such that for any complex numbers $a_0,\ldots ,a_n$ containing a lacunary subsequence $|a_{\ell_1}|\ge 2|a_{\ell_2}|\ge \ldots \ge 2^k|a_{\ell_k}|$, we have 
		$$
		\P\left(\left  |\sum_{i=0}^n a_i\xi_i\right |\le |a_{\ell_k}|  \right)\le C'e^{-\alpha k}.
		$$ 
	\end{lemma}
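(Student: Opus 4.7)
The plan is to prove this by Halász's characteristic-function method, resting on Esséen's concentration inequality. First, I would condition on $(\xi_i)_{i\notin L}$ where $L:=\{\ell_1,\ldots,\ell_k\}$; the sum inside the absolute value becomes $T-x$ for a deterministic shift $x$ and $T := \sum_{j=1}^{k}a_{\ell_j}\xi_{\ell_j}$. Writing $r=|a_{\ell_k}|$, the task reduces to the uniform bound
\[
\sup_{x\in\R}\P\bigl(|T-x|\le r\bigr)\le C' e^{-\alpha k}.
\]
By Esséen's concentration inequality this is controlled by $C r\int_{-1/r}^{1/r}|\phi_T(t)|\,dt$, and independence factorizes $|\phi_T(t)|=\prod_{j=1}^k|\phi_{\xi_{\ell_j}}(a_{\ell_j}t)|$.

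The second step is the single-variable characteristic-function bound: there exist $c,M_0>0$, depending only on $\ep_0$ and $C_0$, such that for every $i$ and every $|u|\le M_0$,
\[
|\phi_{\xi_i}(u)|^{2}\le \exp\bigl(-c\min(u^{2},1)\bigr).
\]
I would prove this by symmetrization, writing $|\phi_{\xi_i}(u)|^{2}=1-2\E\sin^{2}(u(\xi_i-\xi_i')/2)$ with $\xi_i'$ an independent copy; the moment hypothesis $\E|\xi_i|^{2+\ep_0}\le C_0$ together with Chebyshev gives $\E[(\xi_i-\xi_i')^{2}\mathbf{1}_{|\xi_i-\xi_i'|\le M}]\ge 1$ for a sufficiently large constant $M$, and on this event $\sin^{2}(u(\xi_i-\xi_i')/2)\gtrsim u^{2}(\xi_i-\xi_i')^{2}$ when $|u|\le 2/M$. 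For $|u|\in[2/M,M_0]$ one obtains the uniform bound $|\phi_{\xi_i}(u)|^2\le e^{-c}$ by the same truncation argument, and for $|u|>M_0$ one retains only the trivial estimate $|\phi|\le 1$.

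The third and final step exploits lacunarity via a dyadic decomposition of the integration region. Split $[-1/r,1/r]$ into shells $S_p:=\{|t|\in[2^{p-1}/r,2^{p}/r]\}$ for $p=-k+1,\ldots,0$. The lower bound $|a_{\ell_j}|\ge 2^{k-j}r$ forces, for $t\in S_p$, the value $|a_{\ell_j}t|$ to lie in the productive range $[2^{-1},M_0]$ for $\Theta(|p|+1)$ distinct indices $j$; each such factor contributes $|\phi_{\xi_{\ell_j}}(a_{\ell_j}t)|\le e^{-c/2}$, and the remaining factors are bounded crudely by $1$. After integration and geometric summation in $p$, the whole integral is dominated by $r^{-1}e^{-\alpha k}$ for a suitable $\alpha>0$, which gives the claimed bound. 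The main obstacle is the second step: controlling $|\phi_{\xi_i}(u)|$ uniformly over the class of distributions allowed here --- without a density or a tail decay stronger than $(2+\ep_0)$ moments, the characteristic function may return arbitrarily close to $1$ for large $|u|$; the Chebyshev truncation argument circumvents this in the range $|u|\le M_0$, and this is enough once one notes that $t\in [-1/r,1/r]$ forces $|a_{\ell_j}t|\lesssim 2^{k-j}$, keeping us within the controllable regime for the factors that matter.
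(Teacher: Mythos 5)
The paper does not actually prove this lemma; it quotes it verbatim as Lemma~9.2 of \cite{TVpoly}, so the only thing to assess is whether your argument stands on its own. Your steps 1 and 2 (conditioning on the coordinates outside the lacunary set, Ess\'een's inequality, and the symmetrization/truncation bound $|\phi_{\xi_i}(u)|^2\le e^{-cu^2}$ for $|u|\le 2/M$) are standard and essentially fine, except that the extension to $|u|\in[2/M,M_0]$ is unjustified: the truncation argument gives nothing there, and the uniform bound $|\phi_{\xi_i}(u)|^2\le e^{-c}$ is simply false for, say, Rademacher variables once $M_0\ge\pi$. The fatal problem is step 3. The hypothesis gives only a \emph{lower} bound $|a_{\ell_j}|\ge 2^{k-j}r$, hence only a lower bound $|a_{\ell_j}t|\ge 2^{k-j+p-1}$ for $t\in S_p$; it does not force any of these values into a bounded window $[2^{-1},M_0]$. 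If the ratios $|a_{\ell_j}|/|a_{\ell_{j+1}}|$ all equal $100$, then for each fixed $t$ at most one index lands in any window of bounded ratio; the $\Theta(|p|+1)$ indices your count identifies are merely \emph{not excluded} from the window by the lower bound, and most of them are in fact guaranteed to satisfy $|a_{\ell_j}t|\ge M_0$ --- exactly the regime where no bound on $|\phi_j|$ better than $1$ is available (for lattice-valued $\xi_j$, $|\phi_j|$ returns to $1$ periodically).

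Even granting your counting claim, the bookkeeping does not yield the conclusion: shell $S_p$ has measure $\asymp 2^p/r$, so the Ess\'een bound becomes $r\sum_{p\le 0}(2^p/r)\,e^{-c(|p|+1)}=\sum_{q\ge 0}2^{-q}e^{-c(q+1)}=\Theta(1)$, not $e^{-\alpha k}$. The outermost shell alone ($|t|\asymp 1/r$) has measure $\asymp 1/r$ and in your scheme gains only $O(1)$ factors of $e^{-c}$, hence contributes a constant. To get exponential decay in $k$ one must extract $\Theta(k)$ units of decay precisely on the outer shells, where almost all arguments $|a_{\ell_j}t|$ are large; under bare $(2+\ep_0)$-moment assumptions this cannot be done by any pointwise bound on the integrand and requires instead an estimate showing that the set of $t$ on which many large-argument factors are simultaneously close to $1$ has exponentially small measure (a Hal\'asz/lacunary-series type argument), or a different method altogether. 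That is the missing idea, and without it the proposed proof does not establish the lemma.
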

	\begin{proof}[Proof of \eqref{eq:mj}]
		We apply Lemma \ref{lm:anti} to $f_n(x_{j+1}) = \sum_{i=0}^{n} x_{j+1}^{i} \xi_i$. So, $a_i:=x_{j+1}^{i}$ is a decreasing sequence in $i$. We will find by a greedy algorithm a lacunary sequence with $a_{\ell_k}\approx m^{-3}$. Let $\ell_1=0$ and for $i\ge 1$, define
		$$
		\ell_{i+1}=\min\{t>\ell_{i}\; : \; a_{\ell_i}\ge 2 a_{t}\}.
		$$		
		Let $k$ be such that $a_{\ell_{k+1}}<m^{-3}\le a_{\ell_k}$. Since $a_0=1\ge m^{-3}$ and 
		$$a_n = x_{j+1}^{n}\le (1-1/(4m))^{n}\le m^{-3}/2,$$
		it holds that $\ell_{k+1}\le n$.
		
		We observe that for all $i$,
		$$\frac{a_i}{a_{i+1}} = x_{j+1}^{-1}\le x_1^{-1} = 2.$$
		So, 
		$$2\le \frac{a_{\ell_i}}{a_{\ell_{i+1}}} \le \frac{a_{\ell_i}}{a_{\ell_{i+1}-1}} \times 2\le 4$$
		which gives
		$$2^{k+1}\le \frac{a_{0}}{a_{\ell_{k+1}}}\le 4^{k+1}.$$
		Since $a_0=1$, we have $4^{-k-1}\le a_{\ell_{k+1}} < m^{-3}$. Thus, by Lemma \ref{lm:anti},
		\begin{eqnarray}
			\P(m_j\le m^{-3})&\le& \P\left (\left |\sum_{i=0}^{n} a_i \xi_i\right |\le m^{-3}\right )\le \P\left (\left |\sum_{i=0}^{n} a_i \xi_i\right |\le a_{\ell_k}\right )\nonumber\\
			&\ll& e^{-\alpha k} \ll 4^{-ck}\le m^{-c'}\nonumber
		\end{eqnarray}
		for some constants $c$ and $c'$. This completes the proof.
	\end{proof}

	\section{Acknowledgments} We thank Hoi Nguyen for valuable comments on the presentation of the manuscript. We thank Gr\'egory Schehr and Oleg Zaboronski for helpful references.

	\bibliographystyle{plain}
	\bibliography{polyref}

\end{document}